\newcommand{\be}{\begin{equation}}
\newcommand{\ee}{\end{equation}}
\newcommand{\beq}{\begin{eqnarray}}
\newcommand{\eeq}{\end{eqnarray}}
\theoremstyle{definition}
\newtheorem{ex}{Example}
\newtheorem{thm}{Theorem}
\newtheorem{cl}{Corollary}
\newtheorem{prop}{Proposition}
\newtheorem{lm}{Lemma}
\newtheorem{clm}{Claim}
\newtheorem{rmk}{Remark}
\theoremstyle{remark}
\numberwithin{equation}{section}
\def\be{\begin{equation}}
\def\ee{\end{equation}}
\newcommand{\R}{\mathbb{R}}
\newcommand{\M}{\mathcal{M}^{n}}
\DeclareMathOperator{\Ric}{Ric}
\DeclareMathOperator{\vol}{Vol}
\DeclareMathOperator{\inj}{inj}
\DeclareMathOperator{\Rm}{Rm}
\DeclareMathOperator{\scal}{scal}
\begin{document}

\title[]
{Rotationally symmetric Ricci Flow on $\mathbb{R}^{n+1}$}

\author{Ming Hsiao}
\address[Ming Hsiao]{Department of Mathematics
National Taiwan University
and
National Center for Theoretical Sciences,
Math Division,
Taipei 10617,
Taiwan
}
\email{r12221002@ntu.edu.tw}

\renewcommand{\subjclassname}{
  \textup{2020} Mathematics Subject Classification}
\subjclass[2020]{53E20 
}

\date{\today}

\begin{abstract}
We establish a short-time existence theory for complete Ricci flows under scaling-invariant curvature bounds, starting from rotationally symmetric metrics on $\mathbb{R}^{n+1}$ that are noncollapsed at infinity, without assuming bounded curvature. As a consequence, we construct a complete Ricci flow solution coming out of a rotationally symmetric metric, which has a cone-like singularity at the origin and no minimal hypersphere centered at the origin, using an approximation method.
\end{abstract}

\keywords{Ricci flow, Rotationally symmetry}

\maketitle

\markboth{Ming Hsiao}{}

\section{Introduction}
 
Given a smooth manifold $\M$, a \textit{Ricci flow} is a family of smooth Riemannian metrics $(\M,g(t)))_{t\in I}$ satisfying
\begin{equation}
\frac{\partial}{\partial t}g(t)=-2\Ric(g(t)),
\end{equation}
on $\M\times I$, where $I\subset\R$ is some interval. This was first introduced by R. Hamilton \cite{10.4310/jdg/1214436922} in 1983 and proved that a closed 3-manifold with positive Ricci curvature is diffeomorphic to a spherical space form by evolving the Ricci flow from the given metric, which is an elegant geometric result, and to date, at least for the author, no proof is known that avoids the Ricci flow approach. Following Hamilton's work, geometers developed many deep connections between geometry and topology via the Ricci flow. The most famous and celebrated work is the Poincaré conjecture and the geometrization conjecture, which G. Perelman proved in 2002 through his groundbreaking works on the Ricci flow \cite{perelman2002entropy,perelman2003finiteextinctiontimesolutions,perelman2003ricciflowsurgerythreemanifolds}. 
\medskip

 On the closed manifold, the existence and uniqueness of a Ricci flow solution with a given initial metric were first established by R. Hamilton \cite{10.4310/jdg/1214436922} using the "Nash-Moser iteration method". Later, D. Deturck \cite{Deturck} provided another approach using the "fixed gauge method", which simplified Hamilton's original proof. As a result, these fundamental properties are well understood in the compact setting. However, those properties only progress partially in the non-compact setup. W.-X. Shi established the breakthrough existence results in the complete non-compact setting \cite{10.4310/jdg/1214443292} in 1989, under an additional assumption that the initial metric obeys the \textbf{bounded curvature} assumption. In the past decade, some results have removed the bounded curvature assumption; instead, they assumed some curvature lower bounds \cite{Yi2019almostnonnegative,lee2024threemanifoldsnonnegativelypinchedricci}. A remarkable achievement in 2D is the construction of \textit{instantaneously complete Ricci flow} starting from rough initial data. G. Giesen and P. Topping were the first to establish this result for smooth metrics on Riemann surfaces \cite{GiesenTopping2011}. Later, P. Topping and H. Yin extended it to the case of non-atomic Radon measures on Riemann surfaces \cite{ToppingYin2021}.
 \medskip
 
In this paper, we study the existence of Ricci flow starting from the rotationally symmetric initial data on $\R^{n+1}$. If we assume the flow is unique, which is not always clear but has been verified under scaling invariant estimate \cite{lee2025uniquenessricciflowscaling}, the Ricci flow $g(t):=\sigma(x,t)^{2}dx^{2}+f(x,t)^{2}g_{\text{std}}$ can be written as partial differential equations involving two scalar functions, the warped function $f$ and the change of distance function $\sigma$, where $g_{\text{std}}$ is the standard spherical space form on $S^{n}$. This reduces the complexity of equations, and it seems easier to analyze. However, without assuming bounded curvature, the existence of complete Ricci flow still lacks development. In K\"{a}hler geometry, the existence of K\"{a}hler-Ricci flow with an $U(n)$-invariant K\"{a}hler metric on $\mathbb{C}^{n}$ is analogous to what we are considering. We remark that some results removing the bounded curvature condition, possibly under slightly weaker assumptions. We refer interested readers to \cite{CLT17, YZ13} for further details.
\medskip
        
Our main result demonstrates the existence of \textbf{rotationally symmetric Ricci flow (RSRF)} starting from a rotationally symmetric metric whose \textbf{warped function has a uniform lower bound at infinity}. Intuitively, the manifold bounds a thin neck at infinity, accordingly, the small neck should be able to act as a barrier, bridling the original metric. If we allow the warped function to degenerate, then the situation could become a puzzle. For instance, in \cite{CG16}, the authors constructed a complete and rotationally symmetric Riemannian manifold $(\R^{3},g)$ such that there is \textbf{no complete and $\kappa$-noncollapsed Ricci flow with initial data $(\R^{3},g)$}. Their construction involves infinitely many discrete minimal hyperspheres, centered at the origin $o$ and with the diameter that tends to zero as the distance goes to infinity. These observations guide us to connect the relation between the warped function and the volume ratio of the rotationally symmetric manifold, making these moral viewpoints serious. More rigorously, we prove the following. 
\begin{thm}\label{thm-of-nondegenerated-warped-function}
    Let $(\R^{n+1},g=ds^{2}+f(s)^{2}g_{\text{std}})$ be a complete and rotational symmetric manifold with $\liminf_{s\rightarrow\infty}f(s)>0$. Then there exists a complete and RSRF $(\R^{n+1},g(t))_{t\in[0,T_{g}]}$ with $g(0)=g$ and a constant $\Lambda_{g}>0$ such that
    \begin{equation}
        |{\Rm(g(t))}|\leq\frac{\Lambda_{g}}{t},
    \end{equation}
    on $\R^{n+1}\times(0,T_{g}]$. Furthermore, assuming there are some constants $\varepsilon,\delta,\ell>0$ such that
        \begin{equation}
        \begin{cases}
             |{\Rm(g)}|\leq \ell^{-2}, \text{ on }B_{g}(o,6\ell);\\
            f_{s}\geq\delta, \text{ on }B_{g}(o,3\ell);\\
            f\geq\varepsilon \ell, \text{ on }\R^{n+1}\setminus B_{g}(o,\frac{1}{2}\ell).
        \end{cases}
    \end{equation}
        Then $\Lambda_{g}=\Lambda(n,\delta,\varepsilon)>0$ and $T_{g}= T(n,\delta,\varepsilon)\ell^{2}>0$. 
\end{thm}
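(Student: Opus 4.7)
The plan is to establish the quantitative second statement first; the unconditional existence then follows by fixing, for the given $g$, an $\ell$ small enough that $|\Rm(g)|\leq \ell^{-2}$ on $B_{g}(o,6\ell)$ (using smoothness of $g$ at $o$ and $f_{s}(0)=1$), with $\varepsilon,\delta$ chosen from the hypothesis $\liminf_{s\to\infty}f(s)>0$. For the quantitative part I would use the classical template: approximate $g$ by rotationally symmetric metrics with bounded curvature, apply Shi's existence theorem, establish uniform a priori estimates independent of the approximation, and extract a limit by Hamilton's compactness.

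\textbf{Approximation and lower barrier.} Let $f_{k}$ coincide with $f$ on $[0,k]$ and transition smoothly to a constant $\geq \varepsilon\ell$ for $s\geq k+1$; then $g_{k}=ds^{2}+f_{k}(s)^{2}g_{\text{std}}$ is asymptotically cylindrical with bounded curvature, and for $k$ large it satisfies the same three hypotheses of the theorem. Shi's theorem then produces complete RSRFs $g_{k}(t)$. Under the rotationally symmetric gauge $\sigma\equiv 1$ (enforced via a family of DeTurck diffeomorphisms), $f$ evolves by
\begin{equation*}
f_{t}=f_{ss}-(n-1)\,\frac{1-f_{s}^{2}}{f},
\end{equation*}
and the shrinking cylinder $r(t)=\sqrt{(\varepsilon\ell)^{2}-2(n-1)t}$ serves as a subsolution, so a parabolic maximum principle argument yields a uniform lower bound $f_{k}(s,t)\geq c(n,\varepsilon)\ell$ on $\R^{n+1}\setminus B_{g_{k}(t)}(o,\ell)$ for $t\in [0,T]$ with $T=T(n,\varepsilon)\ell^{2}$.

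\textbf{Uniform $|\Rm|\leq\Lambda/t$ estimate.} I would divide the analysis into two regions. Near the origin, the initial bound $|\Rm(g)|\leq \ell^{-2}$ on $B_{g}(o,6\ell)$ together with non-collapsing inferred from $f_{s}\geq \delta$ on $B_{g}(o,3\ell)$ (which gives a quantitative volume-ratio lower bound) allows Perelman's pseudolocality to produce $|\Rm(g_{k}(t))|\leq C(n,\delta)/t$ on a ball around $o$ at short times. Far from the origin, the sectional curvatures reduce to $-f_{ss}/f$ and $(1-f_{s}^{2})/f^{2}$; Bernstein-type maximum principle arguments applied to suitable combinations of $f,f_{s},f_{ss}$, using the lower bound from the previous paragraph, should deliver an analogous $C(n,\varepsilon)/t$ bound in that region. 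Shi's derivative estimates and Hamilton's compactness then extract a smooth limit $g(t)$ on $\R^{n+1}\times(0,T]$, complete at each $t>0$ and attaining $g$ at $t=0$.

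\textbf{Main obstacle.} The delicate step is the asymptotic half of the curvature estimate. Beyond $B_{g}(o,6\ell)$ the initial metric has no a priori curvature control, so pseudolocality is unavailable and one must exploit rotational symmetry essentially: the curvature of $g_{k}(t)$ is determined by the scalar function $f_{k}(\cdot,t)$ alone, and parabolic smoothing for $f_{k}$ is supposed to convert the lower bound on $f_{k}$ into a pointwise curvature bound at positive times. Making this Bernstein-type argument uniform in $k$, and patching it cleanly with pseudolocality on a common overlapping annulus to produce a true global $\Lambda/t$ bound, constitutes the core technical effort of the proof.
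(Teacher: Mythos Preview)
Your overall template---approximate by cylindrical-at-infinity $g_k$, run Shi, prove uniform $\Lambda/t$ bounds, extract a limit---matches the paper, but your mechanism for the curvature estimate diverges at the essential point and leaves a real gap. The paper does \emph{not} split into a near-origin pseudolocality region and a far-from-origin Bernstein region, nor does it invoke Perelman's pseudolocality (which would need almost-Euclidean initial data). Instead it proves a single pseudolocality lemma (Lemma~\ref{prioriestimate}) valid at every point of an RSRF: if $|W(g(t))|\le c/t$ and the volume ratio stays bounded below \emph{along the flow}, then $|\Rm|\le a/t$. Since rotationally symmetric metrics are locally conformally flat, $W\equiv 0$ automatically and the lemma applies globally with $c=0$; its proof is a point-picking contradiction argument whose blow-up limit is ruled out by the generalized Hamilton--Ivey estimate for LCF ancient solutions together with Perelman's $\text{AVR}=0$ for $\kappa$-solutions. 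The entire $|\Rm|\le\Lambda/t$ bound thus reduces to a volume-ratio lower bound along the flow, which the paper obtains (Lemma~\ref{noncollapsing for bounded curvature v2}) from the three hypotheses on $g$ plus an a priori $|\Rm|\le\alpha/t$ assumption---crucially, the resulting volume constant $v_1$ is independent of $\alpha$. Your Bernstein-type proposal for the far region is exactly the obstacle this approach sidesteps; with no initial curvature control there, making such an argument uniform in $k$ looks genuinely hard, and nothing in the paper suggests it can be done.

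There is a second, independent gap: the Shi existence time $T_k$ depends on $\sup|\Rm(g_k)|$, which is uncontrolled (no curvature bound is assumed on $g$ outside $B_g(o,6\ell)$), so a priori $T_k\to 0$ and your barrier and compactness steps have no uniform time interval to work on. The paper closes this with an explicit bootstrap: Shi's flow comes with $|\Rm|\le a\Lambda/t$; feed $\alpha=a\Lambda$ into the volume lemma to get $v_1=v_1(n,\varepsilon,\delta)$; the LCF pseudolocality then yields $|\Rm|\le\hat\Lambda/t$ with $\hat\Lambda=\hat\Lambda(n,v_1)$ independent of $\alpha$; re-run Shi from time $T$ with curvature $\hat\Lambda/T$ to extend to $(1+\Lambda/\hat\Lambda)T$; re-apply the two lemmas to restore the $\hat\Lambda/t$ bound on the longer interval; iterate. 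This extension mechanism, not a scalar Bernstein estimate on $f$, is what pushes every $g_k$ to a uniform $T_g=\tilde T(n,\delta,\varepsilon)\ell^2$.
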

It is worth noting that our result \textbf{does not require any global curvature conditions}. It can be seen in Section \ref{section1.5} that, even if we only assume the almost PIC1 condition on the rotationally symmetric metric, the warped function can only grow exponentially; the curvature lower bound constraints the growth of the warped function subtly. However, our result has no limitation on the growth order of the warped function. Consequently, this result provides a novel example of Ricci flow in higher dimensions (and does not come from the product with a 2D solution constructed in  \cite{GiesenTopping2011,ToppingYin2021}) originating from spaces with unbounded curvature and with bounded curvature immediately. 
\medskip
    
The proof is based on a pseudolocality theorem for RSRF (Lemma \ref{prioriestimate}) and an a priori estimate for the volume ratio along RSRF (Lemma \ref{noncollapsing for bounded curvature v2}). One drawback of Theorem \ref{thm-of-nondegenerated-warped-function} is the dependency of $T_{g}$ and $\Lambda_{g}$, \textit{the two-sided curvature bound near the origin}. It's unreal to assume the two-sided curvature bound if we aim to approximate the rotationally symmetric metric with a singularity at the origin. In practice, with a uniform two-sided bound, the limit should be at least a $\mathcal{C}^{1,\alpha}$-metric. Nevertheless, if we consider the case of \textbf{non-decreasing warped function}, then we can relax the local curvature condition near the $o$. 

\begin{thm}\label{theoremofnoncollapsing}
        Let $(\R^{n+1},g:=ds^{2}+f(s)^{2}g_{\text{std}})$ be a complete and rotationally symmetric manifold. Suppose that $f_{s}(s)\geq0$ on $\R^{n+1}$. Then there exist two constants 
        $\Lambda_{g}>0$ and $T_{g}>0$ and a complete and  RSRF $(\R^{n+1},g(t))_{t\in[0,T_{g}]}$ starting from $g$ such that \begin{equation*}
            |{\Rm}(g(t))|\leq\frac{\Lambda_{g}}{t}
        \end{equation*}
        on $\R^{n+1}\times(0,T_{g}]$. Furthermore, suppose that $\ell,\delta>0$ be two constants so that $\scal(g)\geq-\ell^{-2}$ and $f_{s}\geq\delta$ on $B_{g}(o,4\ell)$. Then $\Lambda_{g}=\Lambda(n,\delta)>0$ and $T_{g}=T(n,\delta)\ell^{2}>0$.
    \end{thm}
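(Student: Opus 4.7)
The plan is to approximate $g$ by a sequence of smooth rotationally symmetric metrics $g_k$ to which Theorem \ref{thm-of-nondegenerated-warped-function} applies, and then extract a subsequential limit Ricci flow as $k \to \infty$. Before the approximation, observe that the monotonicity $f_s \geq 0$ on $\R^{n+1}$ combined with $f_s \geq \delta$ on $B_g(o, 4\ell)$ and $f(0) = 0$ yields $f(s) \geq f(\ell/2) \geq \delta \ell/2$ for every $s \geq \ell/2$; in particular, the noncollapsing hypothesis of Theorem \ref{thm-of-nondegenerated-warped-function} holds globally with $\varepsilon := \delta/2$, which already explains why the final constants do not depend on $\varepsilon$.

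I will construct $g_k$ by modifying only the warping profile on a shrinking interval $[0, r_k]$ with $r_k \searrow 0$. Concretely, choose smooth $f_k : [0, \infty) \to [0, \infty)$ agreeing with $f$ on $[r_k, \infty)$, with $f_k(0) = 0$, $f_k'(0) = 1$, $f_{k,s} \geq \delta$ on $[0, 4\ell]$ (possible since $1 \geq \delta$ after normalizing $\delta \leq 1$), and $f_{k,s} \geq 0$ everywhere, so that $g_k := ds^2 + f_k^2 g_{\text{std}}$ is a smooth rotationally symmetric metric on $\R^{n+1}$ with globally bounded (though $k$-dependent) curvature. Since $g_k$ has bounded curvature and the above hypotheses on $f_{k,s}$ and $f_k$ are preserved, there exists some $\ell_k \leq \ell$ with $|\Rm(g_k)| \leq \ell_k^{-2}$ on $B_{g_k}(o, 6\ell_k)$; Theorem \ref{thm-of-nondegenerated-warped-function} then produces complete RSRFs $(g_k(t))_{t \in [0, T_k]}$ with $|\Rm(g_k(t))| \leq \Lambda_k/t$.

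The key obstacle is that the constants $T_k = T(n, \delta/2, \delta)\ell_k^2$ and $\Lambda_k = \Lambda(n, \delta/2, \delta)$ provided by Theorem \ref{thm-of-nondegenerated-warped-function} degenerate as $\ell_k \to 0$. To obtain uniform estimates $T_k \geq T(n, \delta)\ell^2$ and $\Lambda_k \leq \Lambda(n, \delta)$ independent of the approximation, one must revisit the internal tools of Theorem \ref{thm-of-nondegenerated-warped-function}, namely the pseudolocality estimate (Lemma \ref{prioriestimate}) and the volume ratio estimate (Lemma \ref{noncollapsing for bounded curvature v2}), and show that, under $f_{k,s} \geq 0$, they yield estimates controlled solely by the scalar curvature lower bound on $B_{g_k}(o, 4\ell)$ and the natural length scale $\ell$. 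The crucial structural facts I intend to use are the preservation of $f_s \geq 0$ along rotationally symmetric Ricci flow, which rules out the formation of minimal hyperspheres and gives uniform volume-ratio bounds on annular regions $\{s \geq \ell/2\}$, together with a maximum-principle argument on the evolution equations of $f$ and $\sigma$ that propagates curvature control inward from this annulus without reference to the initial curvature near the origin.

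Once the uniform bounds $T_k \geq T(n, \delta)\ell^2$ and $\Lambda_k \leq \Lambda(n, \delta)$ are in place, Shi's higher-order derivative estimates together with a diagonal extraction yield a subsequential limit $g_k(t) \to g(t)$ in $C^{\infty}_{\text{loc}}(\R^{n+1} \times (0, T(n, \delta)\ell^2])$. The limit is a complete smooth rotationally symmetric Ricci flow satisfying the scaling-invariant curvature bound, and convergence to the initial datum $g$ as $t \to 0^+$ follows from the smooth convergence $g_k \to g$ away from the origin, combined with a standard equicontinuity argument at the origin using the uniform curvature bound and completeness at infinity via the argument already established in Theorem \ref{thm-of-nondegenerated-warped-function}.
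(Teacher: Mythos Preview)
Your approximation is done at the wrong end of the manifold, and this breaks the argument. In Theorem~\ref{theoremofnoncollapsing} the metric $g$ is already a smooth complete rotationally symmetric metric on $\R^{n+1}$, so its curvature is automatically bounded on any compact set containing the origin; there is nothing to repair on $[0,r_k]$. The only place curvature can be unbounded is at infinity, since the theorem imposes no global curvature hypothesis and, as the discussion in Section~\ref{section1.5} makes explicit, $f$ may grow faster than exponentially. Your $f_k$ agrees with $f$ on $[r_k,\infty)$, so $g_k$ inherits whatever unbounded curvature $g$ may have at infinity; the assertion that $g_k$ has ``globally bounded (though $k$-dependent) curvature'' is therefore false in general. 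Without bounded curvature you cannot apply Shi's theorem, and more importantly you cannot feed $g_k(t)$ into the noncollapsing Lemma~\ref{noncollapsingforboundedcurvature}, whose hypothesis \emph{requires} the flow to have bounded curvature. Invoking Theorem~\ref{thm-of-nondegenerated-warped-function} on $g_k$ instead does not help either: its constants already depend on a two-sided curvature bound of the initial data near $o$, and your modification on $[0,r_k]$ cannot improve the curvature of $g$ on $[r_k,6\ell]$, so the resulting $\ell_k$ is determined by $g$ itself and you gain nothing over applying Theorem~\ref{thm-of-nondegenerated-warped-function} directly to $g$.

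The paper's proof approximates in the opposite direction: it sets $f_k=f$ on $[0,k]$ and makes $f_k$ linear for $s>k+1$ while preserving $f_{k,s}>0$, so that each $g_k$ genuinely has bounded curvature and Shi's existence theorem applies. The heart of the argument is then an extension scheme: one alternates between Lemma~\ref{noncollapsingforboundedcurvature} (the $f_s\geq 0$ noncollapsing estimate, which---crucially---needs only $\scal(g_k(0))\geq -\ell^{-2}$ and $f_{k,s}\geq\delta$ on $B(o,4\ell)$, not the two-sided bound required by Lemma~\ref{noncollapsing for bounded curvature v2}) and the pseudolocality Lemma~\ref{prioriestimate}, together with short re-applications of Shi, to push the flow out to a uniform time $\hat T(n,\delta)$ with $|{\Rm}|\leq\hat\Lambda(n,\delta)/t$. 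The use of Lemma~\ref{noncollapsingforboundedcurvature} rather than Lemma~\ref{noncollapsing for bounded curvature v2} is exactly what removes the dependence on the two-sided curvature bound near the origin; you cite the latter, which would not give constants depending only on $(n,\delta)$.
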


The flows we constructed in both theorems are indeed the same, due to the uniqueness theorem \cite[Theorem 1.1.]{lee2025uniquenessricciflowscaling}. The main difference is to improve the estimates of $\Lambda_{g}$ and $T_{g}$. In addition, it can be seen that after assuming nonnegative scalar curvature and a positive lower bound of $f_{s}(\cdot,0)$, the surviving time can be extended to $+\infty$.
\begin{cl}\label{long-time solution}
        Let $(\R^{n+1},g:=ds^{2}+f(s)^{2}g_{\text{std}})$ be a complete and rotationally symmetric manifold. Suppose that $f_{s}(s)\geq \delta>0$ on $\R^{n+1}$ for some constant $\delta$ and $\scal(g)\geq0$ on $\R^{n+1}$. Then there exists a constant 
        $\Lambda(n,\delta)>0$ and a complete and   RSRF $(\R^{n+1},g(t))_{t\geq0}$ starting from $g$ such that 
        \begin{equation}\label{eq-of-cl1}
            |{\Rm}(g(t))|\leq\frac{\Lambda}{t}
        \end{equation}
        on $\R^{n+1}\times(0,\infty)$. 
\end{cl}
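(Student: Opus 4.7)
\medskip

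\textbf{Proof plan.} The strategy is to apply Theorem \ref{theoremofnoncollapsing} with a free scale parameter $\ell>0$, let $\ell\to\infty$, and glue the resulting short-time flows into a single long-time solution via the uniqueness theorem of \cite{lee2025uniquenessricciflowscaling}.

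First, I would verify that the local hypotheses of Theorem \ref{theoremofnoncollapsing} hold at every scale. Since $f_{s}\geq\delta$ on all of $\R^{n+1}$, in particular $f_{s}\geq\delta$ on $B_{g}(o,4\ell)$ for every $\ell>0$. Similarly, $\scal(g)\geq 0\geq -\ell^{-2}$ globally, so the scalar lower bound condition is satisfied on $B_{g}(o,4\ell)$ for every $\ell$. Theorem \ref{theoremofnoncollapsing} then yields, for each $\ell>0$, a complete RSRF $g_{\ell}(t)$ on $[0,T(n,\delta)\ell^{2}]$ with $g_{\ell}(0)=g$ and
\[
|{\Rm(g_{\ell}(t))}| \leq \frac{\Lambda(n,\delta)}{t}
\]
on $\R^{n+1}\times(0,T(n,\delta)\ell^{2}]$. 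The crucial point is that neither $\Lambda(n,\delta)$ nor the coefficient $T(n,\delta)$ depends on $\ell$.

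Next, I would patch these short-time flows together. For any two scales $\ell<\ell'$, both $g_{\ell}$ and $g_{\ell'}$ are complete RSRFs starting from $g$ that satisfy the scaling-invariant curvature bound $\Lambda(n,\delta)/t$ on the common time interval $[0,T(n,\delta)\ell^{2}]$. By the uniqueness theorem \cite[Theorem 1.1]{lee2025uniquenessricciflowscaling} cited in the introduction, we have $g_{\ell}(t)=g_{\ell'}(t)$ on this interval. Hence the family $\{g_{\ell}\}_{\ell>0}$ assembles coherently into a single complete RSRF $g(t)$ on $\R^{n+1}\times[0,\infty)$ with $g(0)=g$ and satisfying \eqref{eq-of-cl1} with $\Lambda=\Lambda(n,\delta)$.

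The only potential subtlety is confirming that \cite[Theorem 1.1]{lee2025uniquenessricciflowscaling} applies to the rotationally symmetric flows produced by Theorem \ref{theoremofnoncollapsing} in exactly the form of scaling-invariant curvature bound stated there. Once this is granted, the argument introduces no new analytic estimates; it is a clean scaling and extension argument resting on the $\ell$-independence of $\Lambda(n,\delta)$ and $T(n,\delta)$. In particular, no bound on the initial curvature is needed, precisely because the produced curvature bound is already scale-invariant with constants depending only on $n$ and $\delta$.
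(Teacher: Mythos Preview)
Your proposal is correct and essentially identical to the paper's own proof: apply Theorem~\ref{theoremofnoncollapsing} at every scale $\ell>0$ (using $f_{s}\geq\delta$ and $\scal(g)\geq0\geq-\ell^{-2}$ globally) to obtain flows on $[0,T(n,\delta)\ell^{2}]$ with the $\ell$-independent bound $\Lambda(n,\delta)/t$, then assemble them via the uniqueness theorem \cite[Theorem 1.1]{lee2025uniquenessricciflowscaling}. The paper also notes that one can alternatively take a subsequential limit of the $g_{\ell}$, but it presents your uniqueness-based gluing as an equivalent route.
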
   
    One application of Theorem \ref{theoremofnoncollapsing} is to smooth out a specific metric on $\R^{n+1}$, which is regular outside the origin with some additional conditions, by Ricci flow smoothing.
    \begin{cl}\label{roughinitialdata}
        Let $(\R^{n+1},g:=ds^{2}+f(s)^{2}g_{\text{std}})$ be an unbounded rotationally symmetric manifold outside $o$ with \textit{uniform Ricci lower bound}. Suppose that $f_{s}(s)\geq0$ on $\R^{n+1}$ and at $s=0$, $f_{s}(0)\in(0,1)$ and $f^{(k)}(0)=0$ hold for all $k\geq2$. Then there exist two constants 
        $\hat{\Lambda}(n,g)>0$ and $\hat{T}(n,g)>0$ and a complete RSRF $(\R^{n+1},g(t))_{t\in(0,\hat{T}]}$ such that \begin{equation*}
            |{\Rm}(g(t))|\leq\frac{\hat{\Lambda}}{t}
        \end{equation*}
        on $\R^{n+1}\times(0,\hat{T}]$, and $g(t)$ converges to $g$ smoothly outside $o$ as $t\rightarrow0$. Moreover, $(\R^{n+1},d_{g(t)},o)$ converges to $(\R^{n+1},d_{g},o)$ in the pointed Gromov-Hausdorff sense. Furthermore, if $\scal(g)\geq0$ and $f_{s}(s)\geq \delta>0$ outside $o$, then $\hat{T}=+\infty$ and $\hat{\Lambda}=\hat{\Lambda}(n,\delta)$.
    \end{cl}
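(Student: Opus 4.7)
The strategy is approximation by smooth metrics combined with Theorem \ref{theoremofnoncollapsing}. First, I would construct a sequence of smooth rotationally symmetric metrics $g_{k} = ds^{2} + f_{k}(s)^{2} g_{\mathrm{std}}$ on $\R^{n+1}$ with $f_{k} \equiv f$ on $\{s \geq 1/k\}$, and on $\{s < 1/k\}$ smoothly interpolated to a profile meeting the origin-smoothness conditions $f_{k}(0) = 0$, $(f_{k})_{s}(0) = 1$, $f_{k}^{(j)}(0) = 0$ for even $j \geq 2$, while preserving $(f_{k})_{s} \geq 0$. Since $f_{s}(0) \in (0,1)$, by continuity there is a fixed $s_{0} > 0$ on which $f_{s} \geq \delta > 0$; choosing $1/k \ll s_{0}$ and tuning the interpolation (e.g.\ via rescalings of a single smooth model cap), I can arrange that $(f_{k})_{s} \geq \delta$ and $\scal(g_{k}) \geq -\ell^{-2}$ on $B_{g_{k}}(o, 4\ell)$ with $\ell, \delta > 0$ independent of $k$. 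Away from $o$, the uniform Ricci lower bound on $g$ transfers directly to $g_{k}$.

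Applying Theorem \ref{theoremofnoncollapsing} to each $g_{k}$ yields a rotationally symmetric Ricci flow $(g_{k}(t))_{t \in [0, \hat T]}$ with $\hat T = T(n,\delta)\ell^{2}$ and uniform bound $|{\Rm(g_{k}(t))}| \leq \hat \Lambda / t$ on $(0, \hat T]$, where $\hat \Lambda = \Lambda(n,\delta)$, both constants independent of $k$. A diagonal application of Hamilton's Cheeger-Gromov compactness over a sequence $t_{0} \downarrow 0$, using completeness of each $g_{k}(t_{0})$, then extracts a subsequential smooth limit $(\R^{n+1}, g(t), o)_{t \in (0, \hat T]}$, which is a complete rotationally symmetric Ricci flow inheriting the same curvature decay.

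To verify the initial convergence, fix a compact $K \subset \R^{n+1} \setminus \{o\}$. On a neighbourhood of $K$, $g_{k}$ converges smoothly to $g$ and the quantitative hypotheses of the pseudolocality estimate (Lemma \ref{prioriestimate}) hold uniformly in $k$, so $g_{k}(t)$ admits uniform curvature bounds on $K$ for short time; Shi-type derivative estimates then promote this to smooth convergence $g_{k}(t) \to g$ on $K$ as $(k, t) \to (\infty, 0)$, whence $g(t) \to g$ smoothly on $\R^{n+1} \setminus \{o\}$. For the pointed Gromov-Hausdorff convergence, the warped-product structure identifies the $g(t)$-distance from $o$ with the radial coordinate of $g(t)$; combining the exterior smooth convergence with upper bounds on $\mathrm{diam}_{g(t)}(B_{g(t)}(o, \varepsilon))$ coming from the curvature decay $|{\Rm(g(t))}| \leq \hat \Lambda / t$ and from Bishop-Gromov applied to $g_{k}$ yields $(\R^{n+1}, d_{g(t)}, o) \to (\R^{n+1}, d_{g}, o)$ in the pointed GH sense as $t \to 0$.

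For the long-time case, when additionally $\scal(g) \geq 0$ and $f_{s} \geq \delta > 0$ outside $o$, I would refine the approximation so that $\scal(g_{k}) \geq 0$ globally and $(f_{k})_{s} \geq \delta/2$ on $\R^{n+1}$, substitute Corollary \ref{long-time solution} for Theorem \ref{theoremofnoncollapsing} in the previous paragraph, and obtain long-time flows $g_{k}(t)$ with the dimensional decay $\Lambda(n,\delta)/t$; the limiting argument then passes through on any finite time window, giving $\hat T = +\infty$ and $\hat \Lambda = \hat \Lambda(n,\delta)$. The principal obstacle is the construction in the first step: approximating a genuine cone tip, where $f_{s}(0) \in (0,1)$ forces a nontrivial conical defect, by smooth profiles $f_{k}$ with \emph{uniformly} controlled scalar curvature and a \emph{uniform} positive lower bound for $(f_{k})_{s}$ near $o$. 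The defect $1 - f_{s}(0)$ must be absorbed in a region of radius $\sim 1/k$, and a naive smoothing makes the sectional curvature blow up; a careful choice of model cap, rescaled appropriately, is required so that the quantitative hypotheses of Theorem \ref{theoremofnoncollapsing} hold uniformly in $k$.
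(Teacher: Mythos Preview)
Your overall strategy matches the paper's: approximate $g$ by smooth rotationally symmetric metrics $g_{k}$, apply Theorem~\ref{theoremofnoncollapsing} with uniform constants, and extract a limit flow via equivariant Cheeger--Gromov compactness. You also correctly identify the construction of the caps as the principal technical step; the paper carries this out explicitly with rescaled bump functions and, importantly, verifies not only a uniform scalar lower bound near $o$ but a \emph{global} uniform bound $\Ric(g_{k})\geq -2L$, including across the gluing region (your remark that the Ricci lower bound ``transfers directly'' skips this check).

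There is, however, a genuine gap in your treatment of the convergence as $t\to 0$. You never propagate the Ricci lower bound forward in time. The paper devotes Lemma~\ref{eq of Ricci lower bound} and Lemma~\ref{preservation-Ricci lower bound} to this: using the evolution of the least Ricci eigenvalue under vanishing Weyl tensor together with the Lee--Tam local maximum principle, one obtains $\Ric(g_{k}(t))\geq -L'$ on $\R^{n+1}\times[0,\hat T]$ uniformly in $k$. This is what produces the two-sided distance distortion
\[
d_{g_{k}(0)}(x,y)-\beta\sqrt{\Lambda t}\;\leq\; d_{g_{k}(t)}(x,y)\;\leq\; e^{2L't}\, d_{g_{k}(0)}(x,y),
\]
and in particular the \emph{upper} bound. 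The hypothesis $|{\Rm}|\leq\hat\Lambda/t$ by itself only gives the lower bound (via Lemma~\ref{distancelemma}); it does not prevent distances from expanding, so the region $\{s_{0}<\varepsilon\}$ near the cone tip could a priori have arbitrarily large $g_{k}(t)$-diameter as $k\to\infty$. Your appeal to ``Bishop--Gromov applied to $g_{k}$'' does not repair this: Bishop--Gromov controls volume ratios at $t=0$, not distances along the flow. Without the upper distortion bound the pointed Gromov--Hausdorff convergence simply does not follow. The paper then feeds this GH convergence and the time-uniform Ricci lower bound into \cite[Theorem~1.6]{DSS22} to upgrade to smooth convergence outside $o$.

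Your alternative route to smooth convergence on compact $K\subset\R^{n+1}\setminus\{o\}$---uniform curvature at $t=0$ plus local estimates plus Shi---is plausible, though you should invoke Chen's local persistence estimate \cite[Corollary~3.2]{10.4310/jdg/1246888488} rather than Lemma~\ref{prioriestimate} (the latter needs the volume ratio bound for \emph{all} $t$, which is not what you have directly on $K$). Even granting that, this local argument does not by itself deliver the pointed GH statement near the vertex, which is part of the corollary; for that you still need the Ricci lower bound along the flow.
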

    We adopt an approximation approach on the initial metric $g$ and quote the equivariant compactness theorem of Ricci flow \cite[Proposition 3.7.]{BHZ24}. The uniform Ricci lower bound assumption plays a linchpin in joining the initial data smoothly by \cite[Theorem 1.6.]{DSS22}. 
    \medskip
    
    This paper is organized as follows: In Section \ref{section1.5}, we discuss an elementary property of the complete and rotationally symmetric metric on $\R^{n+1}$ and establish a lower bound for volume ratio by the warped function. In Section \ref{section1.6}, we prove a pseudolocality result based on the scaling invariant estimate on the Weyl tensor and investigate two a priori estimates for volume ratio. Also, we present the proofs of Theorem \ref{thm-of-nondegenerated-warped-function} and \ref{theoremofnoncollapsing} and Corollary \ref{long-time solution} in Section \ref{proof of ain theorem}. Finally, we perform the approximation approach of Corollary \ref{roughinitialdata} rigorously in Section \ref{approxmateion section}.
\vskip0.2cm

{\it Acknowledgement}: The author would like to thank his advisor, Yng-Ing Lee, for bringing this question to his attention and for her valuable discussions and comments. Also, the author is grateful to Pak-Yeung Chan and Man-Chun Lee for their interest and for pointing out related works in the K\"{a}hler geometry. The author was supported by the National Science and Technology Council under grant No. 112-2115-M-002-004-MY3 and a direct grant from the National Center for Theoretical Sciences.

\section{An elementary property of Rotationally symmetric metric on $\R^{n+1}$}\label{section1.5}
A rotationally symmetric metric on $\R^{n+1}$ is a Riemannian metric $g=ds^{2}+f(s)^{2}g_{\text{std}}$ with a smooth warped function $f:\R^{+}\rightarrow\R^{+}$ satisfying
\begin{equation}\label{metriccondition}      \lim_{s\rightarrow0}f^{(1)}(s)=1\text{ and }\lim_{s\rightarrow0}f^{(2k)}(s)=0,
\end{equation}
for all $k\geq0$, where $g_{\text{std}}$ is the standard spherical metric on $S^{n}$. According to \cite[Corollary 1.2.]{Yi2019almostnonnegative}, one of the existence of Ricci flows starting from $g$ is given by the conditions $\Rm(g)+\alpha_{0}\mathcal{I}\in\mathcal{C}_{\text{PIC1}}$ and volume is weakly non-collapsed. The following proposition shows whether $g$ satisfies the weak volume non-collapsed condition. 
\begin{prop}[Volume Ratio bounds]\label{volumenoncollapseforrota}
    Let $g=ds^{2}+f(s)^{2}g_{\text{std}}$ be a complete and rotationally symmetric metric on $\R^{n+1}$. Suppose there exists a constant $\delta>0$ such that 
    \begin{equation}
        f(s)\geq\delta s\chi_{[0,1]}(s)+\delta\chi_{(1,\infty)}(s),
    \end{equation} 
    on $\R^{n+1}$. Then there exists a constant $v=v(\delta,n)>0$ such that $\frac{\vol_{g}(B_{g}(x,r))}{r^{n+1}}\geq v$ for all $x\in\R^{n+1}$ and $r\in(0,1]$. In particular, if $f_{s}\geq0$ on $\R^{n+1}$ and $f_{s}\geq\delta$ on $|s|\leq1$, then the assumption holds.
\end{prop}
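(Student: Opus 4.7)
Set $s_0 := d_g(o, x)$ and $\omega_n := \vol_{g_{\text{std}}}(S^n)$. The plan is to split into two regimes depending on the ratio of $s_0$ to $r$, and in each regime to construct an explicit subset of $B_g(x, r)$ whose volume I can estimate directly from the warped-product formula.

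In the near-origin regime $s_0 \leq r/2$, I invoke the triangle inequality to get $B_g(o, r/2) \subset B_g(x, r)$, and then use $f(s) \geq \delta s$ on $[0,1]$ (noting $r/2 \leq 1/2$) to compute
\[
\vol_g(B_g(x, r)) \;\geq\; \omega_n \int_0^{r/2} f(s)^n\, ds \;\geq\; \frac{\omega_n \delta^n}{(n+1)\, 2^{n+1}}\, r^{n+1}.
\]
In the far regime $s_0 > r/2$, the approach is to write down an explicit ``cylindrical slab''. Using the ``two-leg'' upper bound $d_g((s,\theta), (s_0,\theta_0)) \leq |s - s_0| + f(s)\, d_{g_{\text{std}}}(\theta, \theta_0)$ (concatenate a radial segment with a path along the fiber sphere), the subset
\[
V := \bigl\{(s,\theta) : s_0 \leq s \leq s_0 + \tfrac{r}{2},\ d_{g_{\text{std}}}(\theta, \theta_0) \leq \min\!\bigl(\pi,\, \tfrac{r}{2 f(s)}\bigr)\bigr\}
\]
sits inside $B_g(x, r)$. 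Combining the warped-product volume formula with the standard sphere-ball estimate $\vol_{g_{\text{std}}}(B_{g_{\text{std}}}(\theta_0, \rho)) \geq c_n \min(\rho, \pi)^n$, one obtains
\[
\vol_g(V) \;\geq\; c_n \int_{s_0}^{s_0 + r/2} \min\!\bigl(\pi f(s),\, \tfrac{r}{2}\bigr)^n\, ds.
\]
I then finish by noting that on the integration range $s \geq s_0 \geq r/2$, and since $r \leq 1$ the hypothesis forces $f(s) \geq \delta \min(s,1) \geq \delta\, r/2$. This makes the integrand at least $\bigl((r/2)\, \min(\pi\delta, 1)\bigr)^n$ and yields a lower bound of the required form $v(n,\delta)\, r^{n+1}$.

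The ``in particular'' assertion is immediate: $f(0) = 0$ from (\ref{metriccondition}) together with $f_s \geq \delta$ on $[0,1]$ integrates to $f(s) \geq \delta s$ for $s \in [0,1]$, hence $f(1) \geq \delta$; the monotonicity $f_s \geq 0$ on $[1,\infty)$ then propagates $f \geq \delta$ to all $s > 1$.

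The one point that needs real care is the \emph{one-sided} choice of radial interval $[s_0, s_0 + r/2]$ rather than the symmetric $[s_0 - r/2, s_0 + r/2]$ in the far regime. A symmetric choice would, when $s_0$ is only slightly above $r/2$, include $s$-values arbitrarily close to $0$ where $f$ degenerates, so the uniform lower bound on the integrand would collapse. Using the asymmetric half costs only a harmless factor of two but secures $s \geq r/2$, which is precisely what transfers the hypothesis on $f$ into a bound independent of $s_0$.
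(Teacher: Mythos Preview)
Your proof is correct and follows essentially the same approach as the paper: split into a near-origin regime (where a ball about the origin sits inside $B_g(x,r)$ and the linear lower bound on $f$ applies) and a far regime (where a cylindrical slab gives the estimate via the warped-product volume formula). The only cosmetic differences are that the paper first rescales to $r=1$ and uses a \emph{symmetric} slab of half-width $1/4$ (so that $s'\geq 1/4$ is still guaranteed), whereas you work directly with general $r$ and use the one-sided slab---your commentary on why the one-sided choice is cleaner is apt, but both versions work.
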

\begin{proof}
    Due to the rescaling property, it suffices to deal with the case $r=1$. For $x=o$, the co-area formula implies
    \begin{equation}
        \vol_{g}(B_{g}(o,\frac{1}{4}))=\int_{0}^{\frac{1}{4}}f(s)^{n}\omega_{n}ds\geq\delta^{n}\int_{0}^{\frac{1}{4}}s^{n}\omega_{n}ds=\frac{\left(\frac{1}{4}\right)^{n+1}\delta^{n}\omega_{n}}{n+1}=:v_{1},
    \end{equation}
    where $\omega_n$ is the volume of unit sphere $S^n$. Note that $B_{g}(o,\frac{1}{4})\subset B_{g}(x,1)$ for all $x\in B_{g}(o,\frac{1}{2})$, this shows $\vol_{g}(B_{g}(x,1))\geq v_{1}$ for all $x\in B_{g}(o,\frac{1}{2})$. For $x\notin B_{g}(o,\frac{1}{4})$, $f(x)\geq\frac{\delta}{4}$. It remains to the case $x\notin B_{g}(x,\frac{1}{2})$. Note that 
    \begin{equation}
       A^{\frac{1}{4}}_{x}\subset B_{g}(x,\frac{1}{2}),
    \end{equation}
    where
    \begin{equation}
         A_{x}^{r}:=\left\{(s',\theta')\in\R^{+}\times S^{n}:|s'-s|<r\text{ and }d_{g_{\text{std}}}(\theta,\theta')<\frac{r}{f(s')}\right\},
    \end{equation}
    for all $r\geq0$. By the co-area formula again,  
    \begin{align*}
        \vol_{g}(A^{\frac{1}{4}}_{x})=&\int_{s-\frac{1}{4}}^{s+\frac{1}{4}}f(s')^{n}\vol_{g_{\text{std}}}\left(B_{g_{\text{std}}}\left(\theta,\frac{1}{4f(s')}\right)\right)ds'\\
        =&\int_{s-\frac{1}{4}}^{s+\frac{1}{4}}f(s')^{n}\int_{0}^{\min\left\{\frac{1}{4f(s')},\pi\right\}}{\omega_{n-1}\sin^{n-1}(t)dt}ds'.
    \end{align*}
    If $f(s')<1/\pi$, then
\begin{equation}\label{eq42}
    \int_{0}^{\min\left\{\frac{1}{4f(s')},\pi\right\}}{\omega_{n-1}\sin^{n-1}(t)dt}\geq\int_{0}^{\frac{\pi}{4}}{\omega_{n-1}\sin^{n-1}(t)dt}=:A_{n}.
\end{equation}
Otherwise, 
\begin{equation}\label{eq44}
\begin{aligned}
\int_{0}^{\min\left\{\frac{1}{4f(s')},\pi\right\}}{\omega_{n-1}\sin^{n-1}(t)dt}\geq&~ \omega_{n-1}\int_{0}^{\frac{1}{4f(s')}}\left(t\cos\frac{\pi}{4}\right)^{n-1}dt\\
=&~\frac{\omega_{n-1}}{n}2^{-\frac{n-1}{2}}\left(\frac{1}{4f(s')}\right)^{n}.  
\end{aligned}
\end{equation}
Finally, (\ref{eq42}) and (\ref{eq44}) imply
\begin{equation}
    \begin{aligned}
        \vol_{g}(A_{x}^{\frac{1}{4}})=&\int_{s-\frac{1}{4}}^{s+\frac{1}{4}}f(s')^{n}\int_{0}^{\min\left\{\frac{1}{4f(s')},\pi\right\}}{\omega_{n-1}\sin^{n-1}(t)dt}ds'\\
        \geq&\int_{s-\frac{1}{4}}^{s+\frac{1}{4}}f(s')^{n}\min\left\{A_{n},\frac{\omega_{n-1}}{n}2^{-\frac{n-1}{2}}\left(\frac{1}{4f(s')}\right)^{n}\right\}ds'\\
        \geq&\int^{s+\frac{1}{4}}_{s-\frac{1}{4}}\min\left\{A_{n}\left(\frac{\delta}{4}\right)^{n},B_{n}\right\}ds'\\
        =&\frac{1}{2}\min\left\{A_{n}\left(\frac{\delta}{4}\right)^{n},B_{n}\right\}=:v_{2}.
    \end{aligned}
\end{equation}
Taking $v:=\min\{v_{1},v_{2}\}$, only depends on $\delta,n$, we have shown that $\vol_{g}(B_{g}(x,1))\geq v_{0}$ for all $x\in\R^{n}$, which confirms the assertion.
\end{proof}
We now have a sufficient condition for weakly volume non-collapsed, so we lack whether we have  $\Rm(g)+\alpha_{0}\mathcal{I}\in\mathcal{C}_{\text{PIC1}}$. We adopt the same notations as in \cite[Section 2]{SD04} to derive the curvature of metric $g$. Let $K$ and $L$ be the sectional curvatures of the $2$-plane perpendicular to and tangent to the sphere $\{x\}\times S^{n}$ respectively. Then
\begin{equation}\label{sectionalcurvofRSmetric}
    K=-\frac{f''(s)}{f(s)}\ \text{ and }\ L=\frac{1-f'(s)^{2}}{f(s)^{2}}.
\end{equation}
Also, the Ricci curvature and scalar curvature are
\begin{equation}\label{eqofRic}
    \Ric(g)=\frac{-nf''(s)}{f(s)}ds^{2}+\left[-f(s)f''(s)+(n-1)(1-f'(s)^{2})\right]g_{\text{{std}}},
\end{equation}
and 
\begin{equation}
    \scal(g)=n\left(-2\frac{f''(s)}{f(s)}+(n-1)\frac{1-f'(s)^{2}}{f(s)^{2}}\right).
\end{equation}
Note that $(\R^{n+1},g)$ is locally conformally flat, so that the curvature operator can be written as
\begin{equation}
    \Rm(g)=W(g)+\frac{1}{n-1}\left(\Ric(g)-\frac{\scal(g)}{2n}g\right)\owedge g=\frac{1}{n-1}\left(\Ric(g)-\frac{\scal(g)}{2n}g\right)\owedge g,
\end{equation}
where $\owedge$ is the Kulkarni–Nomizu product. Since $\mathcal{I}=g\owedge g$, we get
\begin{equation}
    \Rm(g)+\mathcal{I}=\frac{1}{n-1}\left\{\left[\Ric(g)-\left(\frac{\scal(g)}{2n}-(n-1)\right)g\right]\owedge g\right\}=:A_{g}\owedge g.
\end{equation}
By the definition of the Kulkarni–Nomizu product, $\Rm(g)+\mathcal{I}\in\mathcal{C}_{\text{PIC1}}$ is equivalent to 
\begin{equation}
    A_{g}(e_{1},e_{1})+A_{g}(e_{2},e_{2})+2A_{g}(e_{3},e_{3})\geq0,
\end{equation}
for all orthonormal three $g$-frames $\{e_{1},e_{2},e_{3}\}$. By a direct computation, we get
\begin{equation}
    A_{g}=\left(1-\frac{f''(s)}{f(s)}-\frac{1-f'(s)^{2}}{2f(s)^{2}}\right)ds^{2}+\left(\frac{1-f'(s)^{2}}{2}+f(s)^{2}\right)g_{\text{std}}.
\end{equation}
So one can easily check that the following condition is equivalent to   $\Rm(g)+\mathcal{I}\in\mathcal{C}_{\text{PIC1}}$.
\begin{equation}\label{PIC1condition}
    \begin{cases}
        \frac{1-f'(s)^{2}}{2f(s)^{2}}+1\geq0,\text{ }\forall s>0;\\
        2-\frac{f''(s)}{f(s)}\geq0,\text{ }\forall s>0.        
    \end{cases}
\end{equation}
We focus on the first condition. It implies $f'(s)\leq\sqrt{1+2f(s)^{2}}$ for all $s\geq0$. This forces  $f(s)\leq\left(\exp\left(\sqrt{2}s\right)-1\right)/\sqrt{2}$ and $f_{s}(s)\leq\exp\left(\sqrt{2}s\right)$ for all $s\geq0$. Therefore, the lower curvature bounds restrict the warped function's growth. 
\medskip

In summary, as a special case of \cite[Corollary 1.2.]{Yi2019almostnonnegative}, the following result holds:
\begin{ex}
    Let $f:[0,\infty)\rightarrow[0,\infty)$ be a smooth function satisfying (\ref{metriccondition}) and (\ref{PIC1condition}). Then for any $n\geq4$, there is a complete and RSRF $(\mathbb{R}^{n+1},g(t))_{t\in[0,T]}$ with initial metric $g(0)=ds^{2}+f(s)^{2}g_{\text{std}}$.
\end{ex}
The above discussion expounds that we shouldn't require the curvature to be bounded from below at the initial time if we allow the growth of the warped function $f(s,0)$ faster than the exponential growth. On the other hand, under the assumption of Theorem \ref{thm-of-nondegenerated-warped-function}, one could make the growth of $f$ as large as we want.

\section{Volume ratio and Pseudolocality}\label{section1.6}
\subsection{Pseudolocality theorem}
The first key observation is the pseudolocality theorem for RSRF, which follows from the non-collapsing property. The validity of this theorem is supported by the generalized Hamilton-Ivey estimate in this context.
\begin{lm}[Pseudolocality theorem]\label{prioriestimate}
Suppose that  $(\mathcal{M}^{n},g(t))_{t\in[0, T]}$ is a Ricci flow so that for some $x_{0}\in\mathcal{M}^{n}$, 
\begin{enumerate}
    \item $B_{t}(x_{0},2)\Subset\mathcal{M}^{n}$ for $t\in[0,T]$;
    \item $|\text{W}(g(x,t))|\leq\frac{c}{t}$ for all $x\in B_{t}(x_{0},2)$ and $t\in[0,T]$, for some $c\geq0$;
    \item\label{volumeratiolowerbound} $\frac{\vol_{t}(x,r)}{r^{n}}\geq v_{1}>0$ for all $t\in[0,T]$ and $x\in B_{t}(x_{0},1)$ and $r\in(0,1)$.
\end{enumerate}
Then there exists two constants $a=a(v_{1},c,n)>0$ and $\hat{T}=\hat{T}(v_{1},c,n)$ so that for all $t\in(0,\min\{T,\hat{T}\}]$ we have
\begin{equation}
    |\text{Rm}(g(x_{0},t))|\leq\frac{a}{t}.
\end{equation}
\end{lm}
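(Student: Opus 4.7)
The plan is a proof by contradiction using a Perelman-style point-picking and a Cheeger--Gromov--Hamilton blow-up, arranged so that the Weyl-tensor bound $|W|\le c/t$ degenerates into $W\equiv 0$ on the blow-up limit; the limiting flow is then a $\kappa$-noncollapsed, locally conformally flat ancient Ricci flow, and a generalized Hamilton--Ivey pinching together with Perelman's asymptotic-volume-vanishing theorem yields the contradiction.

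\medskip

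Concretely, suppose the conclusion fails. Then there is a sequence of Ricci flows $(\mathcal{M}^n_k,g_k(t))_{t\in[0,T_k]}$ satisfying (1)--(3) with the fixed constants $v_1,c$, together with points $(x_0^k,t_k)$ obeying $t_k\to 0$ and $t_k|\Rm(g_k)|(x_0^k,t_k)\to\infty$. First I would run a parabolic point-picking inside $B_t(x_0^k,1)\times(0,t_k]$ to replace $(x_0^k,t_k)$ with $(y_k,s_k)$ such that $Q_k:=|\Rm(g_k)|(y_k,s_k)$ satisfies $Q_ks_k\to\infty$ and $|\Rm(g_k)|\le 2Q_k$ in a parabolic neighborhood of $(y_k,s_k)$ of radius $A_kQ_k^{-1/2}$ with $A_k\to\infty$. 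Setting $\tilde g_k(t):=Q_kg_k(s_k+t/Q_k)$, the rescaled flows then satisfy $|\Rm(\tilde g_k)|(y_k,0)=1$, $|\Rm(\tilde g_k)|\le 2$ on ever larger parabolic neighborhoods, $|W(\tilde g_k)|(\cdot,t)\le c/(Q_ks_k+t)\to 0$ pointwise, and the volume non-collapsing condition inherited from (3) now holds at all scales $\le Q_k^{1/2}\to\infty$. Shi's local derivative estimates plus the Cheeger--Gromov--Taylor injectivity-radius bound produce, via Hamilton's compactness theorem, a smooth pointed limit $(\mathcal{M}^n_\infty,g_\infty(t),y_\infty)_{t\in(-\infty,0]}$ that is a complete ancient Ricci flow with bounded curvature, $|\Rm(g_\infty)|(y_\infty,0)=1$, $W(g_\infty)\equiv 0$, and volume ratio at time $0$ uniformly bounded below by $v_1$ at every scale.

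\medskip

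Because $g_\infty(t)$ is locally conformally flat and ancient, the generalized Hamilton--Ivey pinching available in the locally conformally flat setting forces the curvature operator of $g_\infty(t)$ to be nonnegative: once $W\equiv 0$, the Riemann tensor reduces to its Schouten part, and the standard ODE analysis of the Ricci eigenvalues along the flow propagates in every dimension to rule out any sizable negative mode on an ancient trajectory. Hence $g_\infty(t)$ is a $\kappa$-solution. Perelman's asymptotic-volume-ratio vanishing theorem for $\kappa$-solutions with nonnegative curvature operator then implies that either $g_\infty$ is flat or its asymptotic volume ratio at time $0$ is zero; both alternatives are excluded, the first by $|\Rm(g_\infty)|(y_\infty,0)=1$ and the second by the uniform positive lower bound $v_1$ inherited from (3). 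This contradiction proves the lemma.

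\medskip

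The main obstacle is twofold: arranging the point-picking so that $Q_ks_k\to\infty$ (which is what lets the bound $|W|\le c/t$ collapse to $W\equiv 0$ in the limit rather than survive as a nontrivial constraint), and justifying the generalized Hamilton--Ivey pinching under the sole hypothesis $W\equiv 0$ on an ancient Ricci flow in arbitrary dimension. Both steps rely essentially on the scale-invariance of assumptions (2) and (3) and on the fact that the condition $W\equiv 0$ is preserved under Ricci flow, so that it passes cleanly to the blow-up limit.
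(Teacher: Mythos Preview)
Your proposal is correct and follows essentially the same route as the paper: a contradiction argument via Perelman-style point-picking (the paper invokes \cite[Lemma 5.1]{simon2017local} explicitly), a parabolic blow-up in which the scale-invariant bound $|W|\le c/t$ forces $W\equiv 0$ on the ancient limit, then the generalized Hamilton--Ivey estimate for locally conformally flat ancient flows \cite{ZHANG2015774} to obtain nonnegative curvature operator, contradicting Perelman's $\mathrm{AVR}=0$ for $\kappa$-solutions. The only cosmetic differences are the precise point-picking lemma and the constant ($4$ versus your $2$) in the curvature bound on the selected parabolic neighborhood.
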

\begin{proof}[Proof of Lemma \ref{prioriestimate}]
   We argue by contradiction. Suppose the conclusion is false. Then, there exists $v_{0}>0$ and $c\geq0$ such that for any $a_{i}\rightarrow\infty$ and $T_{i}\rightarrow0$, there exists a sequence of Ricci flow $(\mathcal{M}_{i}^{n},g_{i}(t))_{t\in[0, T_{i}]}$ and $x_{i}\in\mathcal{M}_{i}^{n}$ satisfying the assumptions, but the assertion fails in arbitrary small time. By the smoothness of Ricci flow, we may find $t_{i}\in(0,T_{i}]$ so that
    \begin{enumerate}
    \item $B_{g_{i}(t)}(x_{i},2)\Subset\mathcal{M}_{i}^{n}$ for $t\in[0,t_{i}]$;
    \item $|{\text{W}(g_{i}(x,t))}|\leq \frac{c}{t}$ for all $t\in[0,t_{i}]$ and $x\in B_{g_{i}(t)}(x_{i},2)$;
    \item $\frac{\vol_{t}(x,r)}{r^{n}}\geq v_{1}>0$ for all $t\in[0,t_{i}]$, $x\in B_{t}(x_{i},1)$ and $r\in(0,1]$;
    \item\label{curvaturedecay00} $|{\Rm(g_{i}(x_{i},t))}|< a_{i}/t$ for all $t\in(0,t_{i})$;
    \item $|{\Rm(g_{i}(x_{i},t_{i}))}|=a_{i}/t_{i}$.
\end{enumerate}
 We may assume $a_{i}t_{i}\rightarrow0$. By (\ref{curvaturedecay00}) and $a_{i}t_{i}\rightarrow0$, \cite[Lemma 5.1.]{simon2017local} implies that for $i$ large enough, there is a constant $\beta=\beta(n)>0$, $\tilde{t}_{i}\in(0,t_{i}]$, and $\tilde{x}_{i}\in B_{g_{i}(\tilde{t}_{i})}(x_{i},\frac{3}{4}-\frac{1}{2}\beta\sqrt{a_{i}\tilde{t}_{i}})$ so that
\begin{equation}
    |\text{Rm}(g_{i}(x,t))|\leq 4|\text{Rm}(g_{i}(\tilde{x}_{i},\tilde{t}_{i}))|=:4Q_{i},
\end{equation}
whenever $d_{g_{i}(\tilde{t}_{i})}(x,\tilde{x}_{i})<\frac{1}{8}\beta a_{i}Q_{i}^{-\frac{1}{2}}$ and $\tilde{t}_{i}-\frac{1}{8}a_{i}Q_{i}^{-1}\leq t\leq\tilde{t}_{i}$ where $\tilde{t}_{i}Q_{i}\geq a_{i}\rightarrow\infty$. Now, consider the parabolic scaling centered at $(\tilde{x}_{i},\tilde{t}_{i})$, that is, define $\tilde{g}_{i}(t):=Q_{i}g_{i}(Q_{i}^{-1}t+\tilde{t}_{i})$ for all $t\in[-\frac{1}{8}a_{i},0]$. In the proof of \cite[Lemma 5.1.]{simon2017local}, the parabolic domain $B_{\tilde{g}_{i}(0)}(\tilde{x}_{i},\frac{1}{8}\beta a_{i})\times[-\frac{1}{8}a_{i},0]$ is contained in the region that assumption holds. Then we get
\begin{enumerate}
    \item $|{\Rm(\tilde{g}_{i}(\tilde{x}_{i},0))}|=1$;
    \item $|\text{Rm}(\tilde{g}_{i}(x,t))|\leq4$ for all  $(x,t)\in B_{\tilde{g}_{i}(0)}(\tilde{x}_{i},\frac{1}{8}\beta a_{i})\times[-\frac{1}{8}a_{i},0]$;
    \item\label{volumecondition} $\frac{\vol_{\tilde{g}_{i}(t)}(x,r)}{r^{n}}\geq v_{1}>0$ for all $(x,t)\in B_{\tilde{g}_{i}(0)}(\tilde{x}_{i},\frac{1}{8}\beta a_{i})\times[-\frac{1}{8}a_{i},0]$ and $r\in(0,\sqrt{Q_{i}}]$;
    \item\label{scalingstatement}$|\text{W}(\tilde{g}_{i}(x,t))|\leq c(t+Q_{i}\tilde{t}_{i})^{-1}$ for all  $(x,t)\in B_{\tilde{g}_{i}(0)}(\tilde{x}_{i},\frac{1}{8}\beta a_{i})\times[-\frac{1}{8}a_{i},0]$.
\end{enumerate}
Now, by Hamilton's compactness theorem and Cheeger-Gromov-Taylor's classical result, after passing to a subsequence, there is a geometric limit 
 \begin{equation}
 (\mathcal{M}^{n}_{i},\tilde{g}_{i}(t),(\tilde{x}_{i},0))_{t\in[-\frac{1}{8}a_{i},0]}\rightarrow(\mathcal{N}^{n},g_{\infty}(t),(x_{\infty},0))_{t\leq0},    
 \end{equation}
 converge in Hamilton-Cheeger-Gromov's sense. Also, $(\mathcal{N}^{n},g_{\infty}(t))_{t\leq0}$ is a non-flat, with bounded curvature (less than $4$), LCF on each time-slice, $\text{AVR}(g_{\infty}(t))\geq v_{1}$ for all $t\leq0$, ancient solution of complete Ricci flow. By the generalized Hamilton-Ivey estimate \cite[Theorem 1.1.]{ZHANG2015774}, $(\mathcal{N}^{n},g_{\infty}(t))$ has nonnegative curvature operator for all $t\leq0$. But this violates that $\text{AVR}=0$ for $\kappa$-solution  \cite[Proposition 11.4]{perelman2002entropy}.
\end{proof}
\subsection{A priori estimates for volume ratio}
Now, we aim to prove that the condition     (\ref{volumeratiolowerbound}) in Lemma \ref{prioriestimate} is preserved under the curvature decayed $|{\Rm(g(t))}|\leq c/t$. We refer to the following lemma about the lower bound of scalar curvature being preserved under the curvature decayed assumption.
\begin{lm}\label{localscalarcurvaturebound}\cite[Lemma 8.1.]{simon2017local}
For any constants $c,K>0$, $\gamma\in(0,1)$ and $n\in\mathbb{N}$, there exists a constant $\hat{T}(c,K,\gamma,n)>0$ satisfying the following. Let $(\M,g(t))_{t\in[0,T]}$ be a Ricci flow. Suppose that $B_{g(t)}(x_{0},1)\Subset\M$ for some $x_{0}\in\M$ for all $t\in[0,T]$ and
\begin{enumerate}
    \item $\scal(g(0))\geq-K$ on $B_{g(0)}(x_{0},1)$.
    \item $\Ric(g(t))\leq\frac{c}{t}$ on $B_{g(t)}(x_{0},\sqrt{t})$ for all $t\in(0,T]$.
\end{enumerate}
Then 
\begin{equation}
    \scal(g(t))\geq-2K,
\end{equation}
on $B_{g(0)}(x_{0},1-\gamma)$ for all $t\in[0,\min\{T,\hat{T}\}]$.
\end{lm}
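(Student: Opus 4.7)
The strategy is a localized minimum principle for the scalar-curvature evolution
\begin{equation*}
(\partial_t - \Delta)\scal(g(t)) = 2|\Ric(g(t))|^2 \geq 0.
\end{equation*}
On a closed manifold this gives $\scal(g(t)) \geq -K$ globally; here only the initial lower bound on the unit ball is available, so the analysis must be confined to a neighborhood of $x_0$ that remains stable under the flow for short times.

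The first step is a distance-distortion estimate. Integrating Hamilton's formula for $\partial_t d_{g(t)}(x_0, \cdot)$ (in the barrier sense) under the hypothesis $\Ric \leq c/t$ on $B_{g(t)}(x_0, \sqrt{t})$ shows that for $t \leq \hat{T}_1(c,\gamma,n)$ one has the inclusion
\begin{equation*}
B_{g(0)}(x_0, 1-\gamma) \subset B_{g(t)}(x_0, 1 - \gamma/2) \Subset B_{g(0)}(x_0, 1).
\end{equation*}
Based on this, I would construct a Perelman-type spacetime cutoff $\phi \in [0,1]$, identically $1$ on $\{d_{g(t)}(x_0, \cdot) \leq 1 - \gamma/2\}$, supported in $\{d_{g(t)}(x_0, \cdot) \leq 1 - \gamma/4\}$, and satisfying the standard bounds $|(\partial_t - \Delta)\phi| + |\nabla\phi|^2/\phi \leq C(c,\gamma,n)$.

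With $\phi$ in hand, the conclusion would follow from a cutoff minimum principle applied to $v := (\scal(g(t)) + 2K)\phi^k + \lambda t$ for a suitable exponent $k$ and constant $\lambda = \lambda(c,K,\gamma,n)$. Initially $v \geq 0$ everywhere, since $\scal(g(0)) + 2K \geq K > 0$ on the support of $\phi$ at $t=0$ and $v = 0$ off it. If a first zero at $(y_0, t_0) \in \M \times (0, \hat{T}]$ existed it would lie in $\{\phi > 0\}$, where $\nabla v = 0$ and $(\partial_t - \Delta)v \leq 0$. Substituting $\nabla\scal = -k(\scal+2K)\nabla\phi/\phi$ into the cross term $-2\nabla\scal\cdot\nabla(\phi^k)$, using $(\partial_t-\Delta)\scal \geq 0$ and the cutoff bound, and taking $\lambda$ large forces a contradiction for $\hat{T}$ small depending only on $c,K,\gamma,n$.

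The main obstacle I anticipate is the cutoff construction, since $d_{g(t)}(x_0, \cdot)$ is merely Lipschitz in space and nonsmooth in time at cut-locus points. The standard remedy is Calabi's trick, applied at each candidate extremum: replace $d_{g(t)}(x_0, \cdot)$ by a smooth lower barrier based at a shifted point on the minimizing geodesic, so that the Laplacian comparison theorem applies classically, with the hypothesis $\Ric \leq c/t$ on $B_{g(t)}(x_0, \sqrt{t})$ providing precisely the control needed to dominate both $\Delta d$ and $\partial_t d$ on the relevant parabolic scale.
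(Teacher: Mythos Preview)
The paper does not supply its own proof of this lemma: it is quoted verbatim as \cite[Lemma 8.1.]{simon2017local} and used as a black box in the proof of Lemma~\ref{noncollapsingforboundedcurvature}. There is therefore nothing to compare against in the present paper.

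Your sketch is nonetheless the correct outline, and is essentially what Simon--Topping do in the cited reference: the scalar-curvature evolution $(\partial_t-\Delta)\scal\geq 0$, combined with a Perelman cutoff built from $d_{g(t)}(x_0,\cdot)$ and controlled via the $\Ric\leq c/t$ hypothesis through Hamilton's distance-Laplacian estimate, yields the local lower bound by a first-zero argument. One technical point worth tightening: at the putative first zero $(y_0,t_0)$ you have $(\scal+2K)\phi^k=-\lambda t_0<0$, so $\scal+2K<0$ there, and the cross term $-2\nabla\scal\cdot\nabla(\phi^k)=2k(k-1)(\scal+2K)\phi^{k-2}|\nabla\phi|^2$ has the favorable sign only for $k\geq 1$; combined with the bound on $(\partial_t-\Delta)\phi$ and $|\nabla\phi|^2/\phi$, the contradiction for small $\hat T$ goes through. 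The barrier/Calabi issue you flag is handled in the cited paper exactly as you describe.
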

Recall the shrinking ball lemma by Simons-Topping \cite[Corollary 3.3]{simon2017local}, which is based on Perelman-Hamilton's distance distortion lemma \cite[Lemma 8.3.]{perelman2002entropy}.
\begin{lm}\cite[Corollary 3.3]{simon2017local}\label{distancelemma}
    There is a constant $\beta=\beta(n)$ satisfying the following. For any 
    Ricci flow $(\M,g(t))_{t\in[0,T]}$, with the estimate $\Ric(x,t)\leq\frac{(n-1)c}{t}$ on $B_{0}(x_{0},r)\times(0,T]$, where $B_{0}(x_{0},r)\Subset\M$. Then for all $0\leq s\leq t<T$,
    \begin{equation}\label{shrinkingballeq}
        B_{t}(x_{0},L-\beta\sqrt{ct})\subset B_{s}(x_{0},L-\beta\sqrt{cs}),
    \end{equation}
    and 
    \begin{equation}
        d_{t}(x_{0},y)-\beta\sqrt{ct}\geq d_{s}(x_{0},y)-\beta\sqrt{cs},
    \end{equation}
    for all $y\in B_{t}(x_{0},L-\beta\sqrt{ct})$.
\end{lm}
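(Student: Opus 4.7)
My plan is to apply Perelman-Hamilton's barrier estimate for the distance function under a one-sided Ricci bound, using an auxiliary radius that depends on time in a scale-matching way. Recall the estimate: if $\Ric(g(t)) \leq (n-1)K$ on $B_t(x_0, r_0)$, then for any $y$ with $d_t(x_0, y) \geq r_0$ the forward Dini derivative obeys
\begin{equation*}
\frac{d^-}{dt} d_t(x_0, y) \geq -\frac{2(n-1)}{3}\left(K r_0 + \frac{1}{r_0}\right).
\end{equation*}
The key observation is that under the hypothesis $\Ric(g(t)) \leq (n-1)c/t$, the choice $r_0(t) := \sqrt{t/c}$ balances the two terms on the right and produces the clean rate
\begin{equation*}
\frac{d^-}{dt} d_t(x_0, y) \geq -C_n \sqrt{c/t},
\end{equation*}
where $C_n := \tfrac{4(n-1)}{3}$ is a purely dimensional constant.

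Next, I would integrate this Dini inequality in $t$; since $\int_s^t \tau^{-1/2}\,d\tau = 2(\sqrt{t}-\sqrt{s})$, setting $\beta := 2 C_n$ immediately yields the shrinking-ball control
\begin{equation*}
d_s(x_0, y) - d_t(x_0, y) \leq \beta\bigl(\sqrt{ct}-\sqrt{cs}\bigr),
\end{equation*}
which is (up to rearrangement) the pointwise bound asserted in the lemma. The inclusion $B_t(x_0, L - \beta\sqrt{ct}) \subset B_s(x_0, L - \beta\sqrt{cs})$ then follows directly: for $y$ in the smaller time-$t$ ball, $d_t(x_0, y) < L - \beta\sqrt{ct}$, so combining with the above estimate gives $d_s(x_0, y) < L - \beta\sqrt{cs}$.

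The main obstacle is the mismatch between the hypothesis, which supplies the Ricci bound on the fixed initial ball $B_0(x_0, r)$, and Perelman's barrier estimate, which demands it on the moving auxiliary ball $B_\tau(x_0, r_0(\tau))$ at each intermediate time $\tau$. I would close this loop with a standard continuity-in-$\tau$ argument: on the supremal subinterval of $[0, t]$ where the auxiliary inclusion $B_\tau(x_0, r_0(\tau)) \subset B_0(x_0, r)$ has been verified, Perelman's estimate applies and the monotonicity propagates; the shrinking-ball bound just derived then forces the auxiliary inclusion to persist a bit further, so by a bootstrap the validity extends up to the endpoint, provided $L$ is not taken too close to $r$. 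A secondary point is to handle points $y$ with $d_t(x_0, y) < r_0(t)$, where Perelman's estimate does not directly apply; here the conclusion is essentially trivial since both distances are already small, and one can absorb this case into the dimensional constant. The limit $\tau \downarrow 0$, where $r_0(\tau)\to 0$, is handled by continuity of the time-dependent distance function on the compact region of interest.
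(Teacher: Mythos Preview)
The paper does not give its own proof of this lemma; it simply quotes it as \cite[Corollary 3.3]{simon2017local} and remarks that it is ``based on Perelman--Hamilton's distance distortion lemma \cite[Lemma 8.3.]{perelman2002entropy}.'' Your proposal is exactly the standard derivation behind that cited corollary: apply Perelman's one-sided barrier estimate with the scale-matching choice $r_0(\tau)=\sqrt{\tau/c}$, integrate the resulting $\tau^{-1/2}$ rate, and close the loop on the domain via a continuity/bootstrap argument, so there is nothing to compare and your outline is correct.
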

We also need the following local maximal principle due to Tam and Lee.
\begin{lm}\cite[Theorem 1.1.]{Lee_Tam_2022}\label{localmaximalpriciplev2}
    Let $(\M,g(t))_{t\in[0,T]}$ be a Ricci flow which is possibly incomplete. Suppose that $\Ric(g(t))\leq \alpha t^{-1}$ on $\M\times(0,T]$ for some $\alpha>0$. Let $\varphi(x,t)$ be a continuous function on $\M\times[0,T]$ which satisfies $\varphi(x,t)\leq\alpha t^{-1}$ on $\M\times(0,T]$ and 
\begin{equation}
    \left.\left(\frac{\partial}{\partial t}-\Delta_{g(t)}\right)\varphi\right|_{(x_{0},t_{0})}\leq L(x_{0},t_{0})\varphi(x_{0},t_{0}),
\end{equation}
whenever $\varphi(x_{0},t_{0})>0$ in the sense of barrier, for some continuous function $L(x,t)$ on $\M\times[0,T]$ with $L(x,t)\leq\alpha t^{-1}$. Suppose $p\in\M$ and $R>0$ such that $B_{0}(p,2)\Subset\M$ and $\varphi(x,0)\leq0$ on $B_{0}(p,2)$. Then for any $\ell>\alpha+1$, there exists $\hat{T}(n,\alpha,\ell)>0$ such that for $t\in(0,\min\{T,\hat{T}\}]$,
\begin{equation}\label{estimateofvarphi}
    \varphi(p,t)\leq t^{\ell}.
\end{equation}
\end{lm}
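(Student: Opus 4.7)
The plan is to establish (\ref{estimateofvarphi}) by a localized parabolic maximum-principle argument with a carefully designed space-time cutoff, exploiting the initial condition $\varphi(\cdot,0)\leq 0$ on $B_{0}(p,2)$ together with the heat-type barrier inequality to propagate a quantitative smallness of $\varphi$ at $p$.

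First I would construct a cutoff $\eta(x,t)$ equal to $1$ on $B_{g(t)}(p,1)$ and supported in $B_{g(t)}(p,3/2)$, satisfying $|\nabla\eta|^{2}\leq C\eta$ and $(\partial_{t}-\Delta)\eta\leq C(n)\sqrt{\alpha/t}$ in the barrier sense; the latter uses the Perelman-Hamilton distance evolution $(\partial_{t}-\Delta)d_{g(t)}(p,\cdot)\geq -C(n)\sqrt{\alpha/t}$, which is available because $\Ric\leq\alpha/t$. By Lemma \ref{distancelemma}, for $\hat{T}=\hat{T}(n,\alpha)$ small enough $\supp\eta(\cdot,t)\subset B_{0}(p,2)$ throughout $[0,\hat{T}]$, so the hypothesis $\varphi(\cdot,0)\leq 0$ on $B_{0}(p,2)$ forces $\eta\varphi\leq 0$ on $\supp\eta$ at $t=0$.

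Next, fixing $\ell>\alpha+1$, I would apply the parabolic maximum principle to the auxiliary function $\Psi(x,t):=\eta(x,t)\varphi(x,t)-Bt^{\ell}$ for a constant $B\geq 1$ chosen so that $B\hat{T}^{\ell+1}\geq\alpha$, ensuring $\Psi\leq 0$ at $t=\hat{T}$ via $\varphi\leq\alpha/t$. At a first space-time point $(x_{0},t_{0})$ where $\Psi$ touches $0$ from below, the relation $\nabla(\eta\varphi)=0$ yields $\nabla\varphi=-\varphi\nabla\eta/\eta$, and $(\partial_{t}-\Delta)\Psi\geq 0$ combined with $(\partial_{t}-\Delta)\varphi\leq L\varphi\leq\alpha\varphi/t_{0}$ and the cutoff bounds gives
\begin{equation*}
B\ell\,t_{0}^{\ell-1}\leq\varphi\Bigl[(\partial_{t}-\Delta)\eta+\tfrac{2|\nabla\eta|^{2}}{\eta}+\tfrac{\alpha\eta}{t_{0}}\Bigr]\leq \frac{C(n)\alpha^{2}}{t_{0}^{2}},
\end{equation*}
after substituting $\varphi\leq\alpha/t_{0}$. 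Since $\ell-1>\alpha$, the resulting bound $Bt_{0}^{\ell+1}\leq C(n)\alpha^{2}/\ell$ combined with an iteration on $B$ (or equivalently a time-weighted change of variable $\tilde\Psi:=\eta\varphi/t^{\alpha}-t^{\ell-\alpha}$, which absorbs the $L\varphi$ term into a lower-order perturbation of the heat operator) allows $B$ to be successively reduced down to $1$; evaluating at $x=p$ where $\eta=1$ then gives $\varphi(p,t)\leq t^{\ell}$.

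The main obstacle is the sharp critical balance between $\ell t^{\ell-1}$ on the left and the error term $\varphi L\leq\alpha^{2}/t^{2}$ on the right, which prevents a one-shot maximum-principle argument from closing: the term $\alpha^{2}/t^{2}$ dominates $\ell t^{\ell-1}$ for any finite $\ell$ as $t\to 0$. The hypothesis $\ell>\alpha+1$ is precisely what governs the convergence of the iteration on $B$, with the effective gain per iteration proportional to $\ell-\alpha-1>0$. Executing this iteration with constants uniform in the step number and treating the non-smooth cutoff $\eta$ in the barrier sense via Calabi's trick on $d_{g(t)}$ is the technical heart of the proof; once this is accomplished, choosing $\hat{T}=\hat{T}(n,\alpha,\ell)$ appropriately small yields the stated assertion.
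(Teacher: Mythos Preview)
The paper does not supply its own proof of this lemma; it is quoted verbatim from \cite[Theorem 1.1.]{Lee_Tam_2022} and used as a black box in the proofs of Lemmas \ref{noncollapsingforboundedcurvature} and \ref{noncollapsing for bounded curvature v2}. So there is no in-paper argument to compare your sketch against.

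Judging your outline on its own terms: the overall framework---a time-dependent cutoff built from $d_{g(t)}(p,\cdot)$, Perelman's distance distortion to control $(\partial_t-\Delta)\eta$, and a localized maximum principle applied to a test function of the form $\eta\varphi-Bt^{\ell}$---is the natural one, and you correctly isolate the central difficulty: substituting the crude a priori bound $\varphi\leq\alpha/t$ into the cutoff error produces $B\ell\,t_0^{\ell-1}\leq C\alpha^{2}/t_0^{2}$, which cannot exclude a first touching point at small $t_0$.

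The gap is in your proposed resolution. The inequality $Bt_0^{\ell+1}\leq C\alpha^{2}/\ell$ gives only an \emph{upper} bound on the first touching time, not a contradiction, so it does not by itself let you lower $B$. The ``iteration on $B$'' you describe is not a closed loop: to bootstrap you would need to feed an \emph{improved pointwise bound on $\varphi$} (not on $B$) back into the cutoff error term $\varphi\cdot\bigl[(\partial_t-\Delta)\eta+2|\nabla\eta|^2/\eta\bigr]$, and each such improvement would cost a shrinking of the spatial domain, while the gain in the exponent of $t$ per step is at most $1/2$ (coming from the $t^{-1/2}$ in the Perelman term), so the iteration cannot start from the initial exponent $-1$ and reach any $\ell>\alpha$. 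Your alternative test function $\tilde\Psi=\eta\varphi/t^{\alpha}-t^{\ell-\alpha}$ does absorb the reaction term $L\varphi$, but the same calculation then yields $(\ell-\alpha)t_0^{\ell-\alpha-1}\leq t_0^{-\alpha}\varphi\cdot C\sqrt{\alpha/t_0}$, which with $\varphi\leq\alpha/t_0$ still fails to close by the same power of $t_0$. In short, the condition $\ell>\alpha+1$ and the mechanism by which it makes the argument converge are asserted but not exhibited; in the actual Lee--Tam proof this is handled by a different choice of weight that shifts the bad power of $t$ into a spatial decay of the cutoff, rather than by an iteration on a multiplicative constant.
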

Now, we declare an a priori estimate of the volume ratio under the $c/t$ estimate in the case of $f_{s}\geq0$.
    \begin{lm}[Noncollapsing Property with $f_{s}\geq0$]\label{noncollapsingforboundedcurvature}
    Let $(\mathbb{R}^{n+1} ,g(t):=ds_{t}^{2}+f(s_{t},t)^{2}g_{\text{std}})_{t\in[0,T]}$ be a complete RSRF with \textbf{bounded curvature}. Then $f_{s}(s,0)\geq0\Rightarrow f_{s}\geq0$ on $\mathbb{R}^{n+1}\times[0,T]$. Furthermore, assume  $|{\text{Rm}}(g(t))|\leq \alpha/t$ on $\mathbb{R}^{n+1}\times[0,T]$, and there exists two constants $\ell,\delta>0$ such that at $t=0$,
    \be
    \begin{cases}
        \scal(g(\cdot,0))\geq-\ell^{-2},\text{ on }B_{0}(o,4\ell);\\
        f_{s}(\cdot,0)\geq\delta, \text{ whenever }|s_{0}|\leq4\ell.
    \end{cases}
    \ee
    Then $\exists v(n,\delta),\hat{T}(n,\alpha,\delta)>0$ such that 
    \begin{equation}
        \frac{\text{Vol}_{g(t)}B_{g(t)}(x,r)}{r^{n+1}}\geq v,
    \end{equation}
    on $\mathbb{R}^{n+1}\times[0,\min\{T,\hat{T}\ell^{2}\}]$ and for all $r\in(0,\ell]$.
\end{lm}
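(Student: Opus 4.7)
The plan is to separate the argument into the qualitative preservation of $f_s\geq 0$ and the quantitative volume noncollapsing bound, built on Lemmas \ref{localscalarcurvaturebound}--\ref{localmaximalpriciplev2} and Proposition \ref{volumenoncollapseforrota}.

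For the qualitative part, a direct calculation in warped-product coordinates (differentiating the evolution $\partial_t f=f_{ss}-(n-1)(1-f_s^2)/f$ in arc length and accounting for the gauge correction $\partial_t\sigma=n\sigma f_{ss}/f$) shows that the radial derivative $\psi:=f_s$ satisfies the linear scalar parabolic equation
\begin{equation*}
\left(\frac{\partial}{\partial t}-\Delta_{g(t)}\right)\psi \;=\; \frac{\scal(g(t))}{n}\,\psi
\end{equation*}
on $\R^{n+1}\times[0,T]$. Under the bounded curvature hypothesis the zero-order coefficient is globally bounded, so the standard maximum principle for complete bounded-curvature Ricci flows propagates $\psi(\cdot,0)\geq 0$ to all $t\in[0,T]$.

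For the quantitative part, rescale so that $\ell=1$. Lemma \ref{localscalarcurvaturebound} applied on $B_{g(0)}(o,4)$ upgrades the initial bound to $\scal(g(t))\geq-2$ on $B_{g(0)}(o,3)$ for $t\in[0,\hat{T}_1]$. Restrict the flow to the open set $\mathcal{N}:=B_{g(0)}(o,3)$, regarded as an (incomplete) Ricci flow. Setting $u:=\psi\,e^{2t/n}$, direct differentiation gives $(\partial_t-\Delta_{g(t)})u=\frac{\scal+2}{n}\,u$ on $\mathcal{N}\times[0,\hat{T}_1]$, which is nonnegative by the scalar curvature bound together with $u\geq 0$ from Step 1. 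Hence $\varphi:=\delta-u$ is a subsolution of the heat equation there, $(\partial_t-\Delta)\varphi\leq 0$, so the differential inequality of Lemma \ref{localmaximalpriciplev2} holds with $L\equiv 0$. Moreover $\varphi(\cdot,0)\leq 0$ on $\mathcal{N}$ by hypothesis, and $\varphi\leq\delta\leq\alpha/t$ for $t\leq\alpha/\delta$. For each $p\in B_{g(0)}(o,1)$ one has $B_{g(0)}(p,2)\Subset\mathcal{N}$, so Lemma \ref{localmaximalpriciplev2} yields $\varphi(p,t)\leq t^{\ell'}$ for some large exponent $\ell'$, whence $\psi(p,t)\geq\delta/2$ on $B_{g(0)}(o,1)\times[0,\hat{T}_2]$. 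The shrinking ball lemma \ref{distancelemma} then converts this into $\psi\geq\delta/2$ on the time-$t$ ball $B_{g(t)}(o,1/2)$ for $t\in[0,\hat{T}_3]$. Integrating from $f(0,t)=0$ along the radial direction and using $\psi\geq 0$ globally from Step 1 for $|s_t|>1/2$, one obtains $f(s_t,t)\geq(\delta/4)\min\{|s_t|,1\}$ on the rescaled flow. The rescaled metric $g(t)$ therefore satisfies the hypothesis of Proposition \ref{volumenoncollapseforrota} with a constant depending only on $\delta$, delivering a uniform volume ratio lower bound $v(n,\delta)$ for all $r\in(0,1]$. Undoing the rescaling yields the desired estimate for $r\in(0,\ell]$ on $\R^{n+1}\times[0,\min\{T,\hat{T}\ell^2\}]$.

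The main obstacle is the correct derivation of the $\psi$-evolution in Step 1, which requires careful bookkeeping of the gauge correction arising from the time dependence of arc length, and the packaging of the subsolution property of $\varphi$ in Step 2 so that Lemma \ref{localmaximalpriciplev2} applies on the incomplete Ricci flow $\mathcal{N}$. The scalar-curvature–driven choice of weight $e^{2t/n}$ is the essential ingredient turning $\psi$ into a supersolution on $\mathcal{N}$, and it is precisely here that the local scalar curvature lower bound from Lemma \ref{localscalarcurvaturebound} enters the argument.
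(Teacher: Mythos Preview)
Your proposal is correct and follows essentially the same route as the paper: after deriving $(\partial_t-\Delta)f_s=\frac{\scal}{n}f_s$ and propagating $f_s\geq 0$ by the global maximum principle, you rescale, invoke Lemma~\ref{localscalarcurvaturebound} for a local scalar lower bound, apply Lemma~\ref{localmaximalpriciplev2} together with the shrinking-ball Lemma~\ref{distancelemma} to retain a quantitative lower bound on $f_s$ near the origin, and finish via Proposition~\ref{volumenoncollapseforrota}. The only difference is cosmetic packaging of the subsolution fed into Lemma~\ref{localmaximalpriciplev2}: the paper uses the linear comparison $\phi=\delta(1-2t)-f_s$ with coefficient $L=\scal/n$, while your exponential weight $u=e^{2t/n}f_s$ gives $\varphi=\delta-u$ with $L\equiv 0$; both exploit the same local scalar bound in the same way.
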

\begin{rmk}
    It's crucial that $v$ is independent of $\alpha$ in the proof of Theorem \ref{theoremofnoncollapsing}.
\end{rmk}
\begin{proof}[Proof of Lemma \ref{noncollapsingforboundedcurvature}]
    The first assertion follows from \cite[Lemma 3.1.]{DIGIOVANNI2021107621}. It suffices to confirm the last assertion. After performing the parabolic rescaling, we may assume $\ell=1$ and $\hat{T}\leq T$. By Lemma \ref{localscalarcurvaturebound}, $\scal(g(t))\geq-2$ on $B_{0}(o,3)\times[0,T_{1}]$ for some $T_{1}=T_{1}(n,\alpha)>0$. Note that the evolution equation of $f_{s}$ is
    \begin{equation}\label{equationoff_s}
        \left(\frac{\partial}{\partial t}-\Delta_{g(t)}\right)f_{s}=\frac{\scal(g(t))}{n}f_{s},
    \end{equation}
    we refer it to \cite[Lemma 3.1.]{DIGIOVANNI2021107621}. Define $\phi(x,t):=\delta(1-2t)-f_{s_{t}}(x,t)$, then (\ref{equationoff_s}) implies 
    \begin{equation}
    \begin{aligned}
        \left(\frac{\partial}{\partial t}-\Delta_{g(t)}\right)\phi=&~\frac{\scal(g(t))}{n}\phi-\frac{\delta(1-2t)\scal(g(t))}{n}-2\delta \\
        \leq&~\frac{\scal(g(t))}{n}\phi+{2\delta }\left(\frac{1-2t}{n}-1\right)\\
        \leq&~\frac{\scal(g(t))}{n}
\phi,        
    \end{aligned}
    \end{equation}
    on $B_{0}(o,3)\times[0,\min\{T_{1},\frac{1}{4}\}]$. Note that $\phi<\frac{\delta}{2}$ and $\phi(0)\leq0$ holds on $B_{0}(o,3)\times[0,\min\{T_{1},\frac{1}{4}\}]$, by Theorem  \ref{localmaximalpriciplev2}, we obtain $\phi\leq\frac{\delta}{4}$ on $B_{0}(o,2)\times[0,\min\{T_{1},\frac{1}{4},T_{2}\}]$ for some constant $T_{2}(n,\alpha,\delta)>0$. Therefore, we get $f_{s}\geq\frac{\delta}{4}$ on $B_{0}(o,2)\times[0,\hat{T}]$ for some $\hat{T}(n,\alpha,\delta)>0$. By Lemma \ref{distancelemma}, after shrink $\hat{T}$ again, $s_{t}(x)\geq s_{0}(x)-1$ holds on $B_{t}(o,1)$ for all $t\leq\hat{T}$. That is, $f_{s_{t}}\geq\frac{\delta}{4}$ on $B_{t}(o,1)$ for all $t\in[0,\hat{T}]$. Combining this with $f_{s}\geq0$, we get $f(s_{t},t)\geq\frac{\delta}{4}s_{t}\chi_{[0,1]}(s_{t})+\frac{\delta}{4}\chi_{(1,\infty)}(s_{t})$ on $\R^{n+1}\times[0,\hat{T}]$. Thus, we complete the proof by applying Proposition \ref{volumenoncollapseforrota} on $\R^{n+1}\times[0,\hat{T}]$.

\end{proof}

The next lemma concerns an a priori estimate of the volume ratio when the initial metric bounds a cylinder outside a compact set. It seems stronger than Lemma \ref{noncollapsingforboundedcurvature}, but requires the two-sided bounds of curvature at $t=0$. 

\begin{lm}[Noncollapsing Property \textbf{without} assuming $f_{s}\geq0$]\label{noncollapsing for bounded curvature v2}
    Let $(\mathbb{R}^{n+1} ,g(t):=ds_{t}^{2}+f(s_{t},t)^{2}g_{\text{std}})_{t\in[0,T]}$ be a complete RSRF with $f(s,0)\geq v\ell$  holds on $s\geq M\ell$ for some constants $v,M,\ell>0$. Furthermore, assume  $|{\text{Rm}}(g(t))|\leq \alpha/t$ on $\mathbb{R}^{n+1}\times[0,T]$ and at $t=0$, there exists some constants $w,\delta,\varepsilon>0$ such that
    \begin{equation}
        \begin{cases}
            |{\Rm(g(0))}|\leq \ell^{-2}, \text{ on }B_{0}(o,6\ell);\\
            |{\Rm(g(0))}|\leq (w\ell)^{-2}, \text{ outside }B_{0}(o,M\ell);\\
            f_{s}(s,0)\geq\delta, \text{ on }B_{0}(o,3\ell);\\
            f\geq\varepsilon \ell, \text{ on }B_{0}(o,(M+4w)\ell)\setminus B_{0}(o,\frac{1}{2}\ell).
        \end{cases}
    \end{equation}
    Then $\exists v_{1}(n,v,\delta,\varepsilon),\hat{T}(n,v,\alpha,\delta,\varepsilon,w)>0$ such that 
    \begin{equation}
        \frac{\text{Vol}_{g(t)}B_{g(t)}(x,r)}{r^{n+1}}\geq v_{1},
    \end{equation}
    on $\mathbb{R}^{n+1}\times[0,\min\{T,\hat{T}\ell^{2}\}]$ and for all $r\in(0,\ell]$.
\end{lm}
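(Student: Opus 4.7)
By parabolic rescaling I assume $\ell=1$, so that by Proposition \ref{volumenoncollapseforrota} it suffices to produce a uniform lower bound of the form $f(s_t,t)\geq c(n,v,\delta,\varepsilon)\min(s_t,1)$ on $\mathbb{R}^{n+1}$ at every $t\in[0,\hat T]$. The plan mirrors the three-step proof of Lemma \ref{noncollapsingforboundedcurvature}, replacing the global monotonicity assumption $f_s\geq 0$ with separate control on (a) a middle annular region where $f\geq\varepsilon$ initially, and (b) an outer region $\{s_0\geq M+2w\}$ where the additional curvature bound $|\Rm(g(0))|\leq w^{-2}$ is available.

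For the inner region $\{s_t\leq 1\}$ I repeat the argument of Lemma \ref{noncollapsingforboundedcurvature} essentially verbatim: apply Lemma \ref{localscalarcurvaturebound} on $B_0(o,5)$ using $|\Rm(g(0))|\leq 1$ to get $\scal(g(t))\geq -C_n$ on $B_0(o,4)$ for $t\leq T_1(n,\alpha)$, then propagate $f_s(\cdot,t)\geq\delta/4$ on $B_0(o,2)$ using the evolution equation $(\partial_t-\Delta)f_s=(\scal/n)f_s$, the barrier $\phi=\delta(1-2t)-f_s$, and Lemma \ref{localmaximalpriciplev2}. Combined with the shrinking-ball Lemma \ref{distancelemma}, this yields $f(x,t)\geq(\delta/4)s_t(x)$ on $\{s_t\leq 1\}$ for $t\leq T_2(n,\alpha,\delta)$, together with a uniform positive lower bound on $f$ at the boundary point $s_0=1/2$.

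Away from the origin, where $f_s\geq 0$ is no longer available, I control $f$ through an ODE analysis at its spatial minimum. Rotational symmetry makes $f$ a function of one variable, so at an interior spatial minimum $f_s=0$ and $f_{ss}\geq 0$, and the evolution $\partial_t f=f_{ss}-(n-1)(1-f_s^2)/f$ gives $\partial_t f\geq-(n-1)/f$, equivalently $(f^2)'(t)\geq-2(n-1)$ at that point. Applied to the infimum of $f(\cdot,t)$ over $\{s_0\geq 1/2\}$ (rigorously, by testing the barrier $\varphi=(\mu/2)^2-f^2-C_1 t$ against Lemma \ref{localmaximalpriciplev2} on balls $B_0(p,2)$ contained in the annulus where $f(\cdot,0)\geq\mu:=\min(\varepsilon,v)$), this envelope argument yields $\inf f(\cdot,t)\geq\mu/2$ as long as the minimum stays interior. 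The boundary contribution at $s_0=1/2$ is handled by Step 1; the behaviour as $s_0\to\infty$ is handled by the outer curvature bound $|\Rm(g(0))|\leq w^{-2}$ on $\{s_0\geq M\}$, which propagates (on the scale of $w^2$) to give $|\partial_t\log f^2|\leq C w^{-2}$ and hence $f(x,t)\geq v/2$ on $\{s_0\geq M+2w\}$ for $t\leq cw^2$.

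Collecting the three pieces and using Lemma \ref{distancelemma} to translate between the $s_0$- and $s_t$-coordinates, I obtain $f(s_t,t)\geq c(n,v,\delta,\varepsilon)\min(s_t,1)$ on $\mathbb{R}^{n+1}$ for $t\leq\hat T(n,v,\alpha,\delta,\varepsilon,w)$, and Proposition \ref{volumenoncollapseforrota} closes the argument. The principal difficulty is the third step: rigorously handling the moving spatial minimum and the transition between the middle and outer regions. I expect the scaling-invariant inequality $f_s^2\leq 1+(\alpha/t)f^2$, which follows from $|L|=|1-f_s^2|/f^2\leq|\Rm(g(t))|\leq\alpha/t$, to be essential in absorbing the reaction term $4f_s^2$ in $(\partial_t-\Delta)f^2=-2(n-1)-4f_s^2$ into the coefficient-bounded form required by Lemma \ref{localmaximalpriciplev2}.
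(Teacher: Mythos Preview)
Your outline has the right architecture, but Step~1 contains a real gap. You cannot ``repeat Lemma~\ref{noncollapsingforboundedcurvature} essentially verbatim'': in that lemma the hypothesis $\phi\leq\alpha/t$ of Lemma~\ref{localmaximalpriciplev2} was free because $f_s\geq 0$ forced $\phi=\delta(1-2t)-f_s\leq\delta$. Here $f_s$ is allowed to be very negative, and nothing you have written bounds $\phi$ from above by $\alpha/t$. The paper's own Remark after the lemma flags exactly this obstruction. The fix in the paper is not Lemma~\ref{localscalarcurvaturebound} (which only controls $\scal$), but Chen's local curvature propagation \cite[Corollary 3.2.]{10.4310/jdg/1246888488}: the two-sided bound $|\Rm(g(0))|\leq 1$ on $B_0(o,6)$ upgrades to a \emph{time-independent} bound $|\Rm(g(t))|\leq K'(n,\alpha)$ on $B_0(o,5)\times[0,\hat T]$, and then the sectional-curvature identity $|1-f_s^2|\leq K'f^2$ together with $f(0,t)=0$ integrates to a uniform bound $-f_s\leq C(K')$ on $B_t(o,4)$. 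Only after this step does Lee--Tam apply to $\phi=\delta(1-K't)-f_s$. Your scaling-invariant inequality $f_s^2\leq 1+(\alpha/t)f^2$ is too weak here: integrating it from the origin gives $f$ bounded by $\sinh(\sqrt{\alpha/t}\,s_t)/\sqrt{\alpha/t}$, which blows up as $t\to 0$ and so yields no uniform control on $f_s$.

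For the middle region your envelope idea is morally right but technically rougher than the paper's. The paper works with the \emph{one-dimensional} operator $\partial_t-\partial_{s_t}^2$ and the quantity $f^n$, for which the $f_s^2$ terms cancel exactly: $(\partial_t-\partial_{s_t}^2)f^n=-n(n-1)f^{n-2}$. One then applies the classical maximum principle to $e^{(n-1)(n-2)t}\bigl(f^n+\tfrac{2}{n-2}\bigr)$ on the compact slab $\{s_0\in[\tfrac12,M+2w]\}\times[0,\hat T]$, with parabolic boundary data supplied by Steps~1 and~3 and the initial bound $f\geq\varepsilon$. This completely sidesteps the reaction term $4f_s^2$ that you correctly identify as the principal difficulty when using the full Laplacian on $f^2$; your proposed absorption via $f_s^2\leq 1+(\alpha/t)f^2$ leaves a residual $(\alpha/t)\mu^2$ on the right-hand side which is not obviously dominated by $L\varphi$ with $L\leq\alpha/t$. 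The outer region (Step~3) is handled the same way in both arguments, via Chen's result at scale $w$ and the ODE $|\partial_t\log f|\leq c(n)K'w^{-2}$.
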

\begin{rmk}
The main challenge here is the absence of the lower bound of $f_{s}$ since the lower bound $f_{s}(\cdot,t)\geq-\alpha/t$ is necessary when applying the Lee-Tam local maximal principle. In the proof of Lemma \ref{noncollapsingforboundedcurvature}, $f_{s}\geq0$ follows from the maximal principle. But the same discussion fails in this scenario. We adopt another approach, although it depends on some stronger global assumptions and the \textbf{local two-sided curvature bounds}.   
\end{rmk}
\begin{proof}[Proof of Lemma \ref{noncollapsing for bounded curvature v2}]
  We first show that near the origin, the slope of $f$ has a uniform lower bound in terms of initial data. By the scaling-invariant property, we may assume $\ell=1$ and $T\geq\hat{T}$. From now on, $\hat{T}$ denotes a positive constant which only depends on $n,v,\alpha,\delta,\varepsilon,w$ and may change from line to line. By \cite[Corollary 3.2.]{10.4310/jdg/1246888488}, $|{\Rm}|\leq e^{c_{1}(n)\alpha}=:K'$ on $B_{0}(o,5)\times[0,\hat{T}]$ for some constant $c_{1}(n)>0$. Therefore, we adopt the calculation as at the end of Section \ref{section1.5}. By Lemma \ref{distancelemma},  $|f_{s_{t}}(\cdot,t)|\leq\sqrt{1+K'f^{2}(\cdot,t)}$ on $B_{t}(o,4)\times\{t\}$ for all $t\leq\hat{T}$, which implies
 \begin{equation}
     \left|\log(\sqrt{K'}f(s_{t},t)+\sqrt{1+K'f(s_{t},t)^{2}})\right|\leq\sqrt{K'}s_{t}\Rightarrow f(s_{t},t)\leq\frac{e^{\sqrt{K'}s_{t}}}{\sqrt{K'}},
 \end{equation}
 for all $s_{t}\leq 4$. Fixed such $t\leq\hat{T}$. Let $\tilde{s}:=\inf\{s\geq0: f_{s_{t}}|_{(s_{t},t)=(s,t)}\leq-1-e^{4\sqrt{K'}}\}>0$ be a positive constant. If $\tilde{s}<4$, then
 \begin{equation}
    \frac{e^{8\sqrt{K'}}}{f(\tilde{s},t)^{2}}\leq\frac{f_{s_{t}}(\tilde{s},t)^{2}-1}{f(\tilde{s},t)^{2}}\leq K',
 \end{equation}
follows from the uniform curvature bound. Combining those two estimates, we receive $\tilde{s}\geq 4>0$ and $-f_{s_{t}}\leq1+e^{4\sqrt{K'}}$ on $B_{t}(o,4)\times\{t\}$ for all $t\leq \hat{T}$. Due to the uniform curvature bound and Lemma \ref{distancelemma}, 
\begin{equation}\label{eq-of-set-inclusion}
    B_{0}(p,e^{-2(n-1)K't}r)\subset B_{t}(p,r)\subset B_{0}(p,r+\beta\sqrt{\alpha t}),
\end{equation}
as long as $B_{0}(p,r+\beta\sqrt{\alpha t})\subset B_{0}(o,5)$ and $t\leq\hat{T}$, where $\beta(n)>0$. Take $e^{-2(n-1)K'\hat{T}}\geq1/2$ and $\beta\sqrt{\alpha\hat{T}}\leq1/4$. Similar to the proof of Lemma \ref{noncollapsingforboundedcurvature}, the function $\phi(s_{t},t):=\delta(1-K't)-f_{s_{t}}(s_{t},t)$ satisfies  $\phi(x,t)\leq\alpha/t$ on $B_{0}(o,3)\times(0,\hat{T}]$ (possibly shrink $\hat{T}$ again). The same discussion as in the proof of Lemma \ref{noncollapsingforboundedcurvature} shows $f(s_{t},t)\geq\frac{\delta}{4}s_{t}$ on $B_{t}(o,1)$ for all $t\in[0,\hat{T}]$. Combining this and (\ref{eq-of-set-inclusion}) (where we apply $s_{t}\geq s_{0}-\beta\sqrt{\alpha t}$), $f(x,t)\geq\frac{\delta}{16}$ on $s_{0}^{-1}(1/2)\times[0,\hat{T}]$.
    
Now, notice that for $s_{0}>M$, the curvature norm of $g(0)$ is less than $w^{-2}$. By \cite[Corollary 3.2.]{10.4310/jdg/1246888488}, $|{\Rm(g(t))}|\leq e^{c_{1}\alpha}w^{-2}=K'w^{-2}$ on $[0,\hat{T}]$ whenever $s_{0}>M+2w$. By the evolution equation of $f$ and the expression of sectional curvature \cite[Section 2.1.]{DIGIOVANNI2021107621}, 
\begin{equation}\label{eq-of-f}
    f_{t}=f_{s_{t}s_{t}}-(n-1)\frac{1-f_{s_{t}}^{2}}{f}\Rightarrow|\partial_{t} f|\leq c(n)K'w^{-2}f,
\end{equation}
on $[0,\hat{T}]$ and $s_{0}>M+2w$. Thence, combining with $f(x,0)\geq v$ whenever $s_{0}(x)\geq M+2w$, after shrink $\hat{T}$ again, $f(x,t)\geq\frac{v}{2}$ on $[0,\hat{T}]$, whenever $s_{0}(x)>M+2w$. Note that by the Cauchy-Schwartz inequality and (\ref{eq-of-f}),
\begin{equation}
\begin{aligned}
        &~\left(\frac{\partial}{\partial t}-\frac{\partial^{2}}{\partial s_{t}^{2}}\right)f^{n}=-n(n-1)f^{n-2}\geq-(n-1)\left[(n-2)f^{n}+2\right]\\
        \Rightarrow&~\left(\frac{\partial}{\partial t}-\frac{\partial^{2}}{\partial s_{t}^{2}}\right)\left[e^{(n-1)(n-2)t}\left(f^{n}+\frac{2}{n-2}\right)\right]\geq0.
\end{aligned}
\end{equation}
Restrict $f$ on $\{x\in\R^{n+1}: s_{0}(x)\in[1/2,M+2w]\}\times[0,\hat{T}]=:\Omega$. The above discussion shows that on the parabolic boundary of $\Omega$,
\begin{equation}
    e^{(n-1)(n-2)t}\left(f^{n}+\frac{2}{n-2}\right)\geq\min\left\{\frac{v}{2},\frac{\delta}{16},\varepsilon\right\}^{n}+\frac{2}{n-2},
\end{equation}
Then, after shrinking $\hat{T}$ again, the standard maximal principle shows there is a constant $\hat{\delta}(n,v,\delta,\varepsilon)>0$ such that $f\geq\hat{\delta}$ outside $B_{0}(o,1/2)\times[0,\hat{T}]$. By (\ref{eq-of-set-inclusion}), $f\geq\hat{\delta}$ holds outside $B_{t}(o,1)$ for all $t\leq\hat{T}$. Therefore, after setting $\tilde{\delta}:=\min\{\frac{\delta}{4},\hat{\delta}\}$, a constant only depends on $n,v,\delta,\varepsilon$, we get $f(s_{t},t)\geq\tilde{\delta}s_{t}\chi_{[0,1]}(s_{t})+\tilde{\delta}\chi_{(1,\infty)}(s_{t})$ on $\R^{n+1}\times[0,\hat{T}]$. Thus, we complete the proof by applying Proposition \ref{volumenoncollapseforrota} on $\R^{n+1}\times[0,\hat{T}]$.
\end{proof}

At the end of this section, we recall Shi's existence theorem of Ricci flows under the bounded curvature assumption.
\begin{lm}\cite[Theorem 1.1.]{10.4310/jdg/1214443292}\label{Shi'sexitencetheroem}
    There exists two dimension constants $\Lambda(n),a(n)>0$ such that, for any complete non-compact manifold $(\M,g)$ with curvature bound $|{\Rm}|\leq k_{0}$, there exists a complete Ricci flow $(\M,g(t))_{t\in[0,\frac{\Lambda}{k_{0}}]}$ starting from $g$ with
    \begin{equation}
        |{\Rm}(g(t))|_{g(t)}\leq ak_{0}\leq\frac{a\Lambda}{t},
    \end{equation}
    on $\M\times[0,\frac{\Lambda}{k_{0}}]$. 
\end{lm}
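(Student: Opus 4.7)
The plan is to adopt the classical DeTurck-trick-plus-exhaustion strategy, which is the framework of Shi's original argument. First, I would replace the degenerate-parabolic Ricci flow by the Ricci--DeTurck flow
\begin{equation*}
\partial_{t} g = -2\Ric(g) + \mathcal{L}_{W(g,\tilde g)} g,
\end{equation*}
where $\tilde g$ is a fixed background metric (take $\tilde g = g$) and $W$ is the harmonic-map-heat-flow vector field chosen so that the system becomes strictly parabolic. This brings the equation within the scope of standard quasilinear parabolic theory on bounded domains.

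Since $\M$ is non-compact, the next step is to exhaust it by an increasing family of relatively compact smooth domains $\Omega_{i} \Subset \Omega_{i+1}$ with $\bigcup_{i}\Omega_{i}=\M$, and solve the Ricci--DeTurck flow on each $\Omega_{i}$ with Dirichlet boundary data $g(t)|_{\partial\Omega_{i}}=\tilde g|_{\partial\Omega_{i}}$. Short-time existence on each $\Omega_{i}$ is then routine. The heart of the proof is Shi's interior derivative estimates: assuming the zeroth-order bound $|\Rm(g(t))|\le a k_{0}$, one shows by a Bernstein-type maximum-principle argument applied to quantities of the form $t^{m}|\nabla^{m}\Rm|^{2}+\beta t^{m-1}|\nabla^{m-1}\Rm|^{2}$ that
\begin{equation*}
|\nabla^{m}\Rm(g(t))|\le \frac{C(m,n,k_{0})}{t^{m/2}}
\end{equation*}
on sets at fixed distance from $\partial\Omega_{i}$, uniformly in $i$.

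Third, I would establish the uniform zeroth-order bound $|\Rm(g_{i}(t))|\le a(n)k_{0}$ on a time interval $[0,\Lambda(n)/k_{0}]$ by ODE comparison against $\dot y = C(n) y^{3/2}$, which arises from Hamilton's inequality $(\partial_{t}-\Delta)|\Rm|^{2}\le C(n)|\Rm|^{3}$; this fixes $\Lambda$ and $a$ as dimensional constants. Combining this with the higher-derivative estimates, the Arzel\`a--Ascoli/Hamilton--Cheeger--Gromov compactness theorem yields a subsequential limit $g_{\infty}(t)$ defined on $\M\times[0,\Lambda/k_{0}]$ solving the Ricci--DeTurck flow. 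To recover an honest Ricci flow, I would integrate the ODE $\partial_{t}\varphi_{t}=-W(g_{\infty}(t),\tilde g)\circ \varphi_{t}$ with $\varphi_{0}=\id$, which is globally solvable thanks to the uniform bound on $W$ implied by the curvature and derivative estimates, and set $\bar g(t):=\varphi_{t}^{*}g_{\infty}(t)$.

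The main obstacle is ensuring that the zeroth-order bound $|\Rm(g_{i}(t))|\le a k_{0}$ holds uniformly in $i$ over the full interval $[0,\Lambda/k_{0}]$: the naive short-time existence interval for the Dirichlet problem on $\Omega_{i}$ could a priori depend on $i$, so one must run the maximum-principle/ODE comparison argument on each $\Omega_{i}$ using the Dirichlet data as a barrier, and verify that $\Lambda,a$ depend only on $n$ and the initial bound $k_{0}$. Once this uniform estimate is established, the passage to the limit and the reversal of the DeTurck gauge are comparatively routine and yield the complete Ricci flow with the required $a\Lambda / t$ decay.
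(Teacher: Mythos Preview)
The paper does not give its own proof of this lemma; it is quoted verbatim as Shi's existence theorem \cite[Theorem 1.1.]{10.4310/jdg/1214443292} and used as a black box throughout Sections \ref{section1.6} and \ref{proof of ain theorem}. Your outline is a faithful sketch of Shi's original argument (DeTurck gauge, exhaustion by compact domains with Dirichlet data, ODE comparison for the zeroth-order curvature bound, Bernstein-type interior derivative estimates, and compactness), so there is nothing substantive to compare and no gap to flag.
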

\section{Proof of Main results}\label{proof of ain theorem}
\begin{proof}[Proof of Theorem \ref{theoremofnoncollapsing}]
    After a parabolic rescaling, we may assume $\ell=1$. For any $k>10$, consider the rotationally symmetric metric $g_{k}=ds^{2}+f_{k}(s)^{2}g_{\text{std}}$ such that
    \begin{equation}
        \begin{cases}
            f_{k}(s)=f(s),\text{ for all }s\in[0,k];\\
            f_{k}\text{ is linear outside }[0,k+1];\\
            \frac{\partial}{\partial s}f_{k}(s)>0,\text{ on }\R^{n+1}.
        \end{cases}
    \end{equation}
    Then $(\R^{n+1},g_{k})$ is a complete and rotationally symmetric manifold with bounded curvature. By Lemma \ref{Shi'sexitencetheroem} and the uniqueness theorem of Ricci flow with bounded curvature \cite[Theorem 1.1.]{CZuniquenessofRF}, there is a complete and RSRF $(\R^{n+1},g_{k}(t))_{t\in[0,T_{k}]}$ starting from $g_{k}$ with bounded curvature and satisfying $|{\Rm}(g_{k}(t))|\leq c/t$, where $c=a\Lambda$ is the dimensional constant in Lemma \ref{Shi'sexitencetheroem}. Note that $T_{k}$ could tend to zero a priori, with no uniform curvature bound in spacetime. However, we claim they can be extended to a uniform time $\hat{T}$ and with an instantaneously uniform curvature bound.
    \begin{clm}
        There exist two constants $\hat{T}(n,\delta),\hat{\Lambda}(n,\delta)>0$ such that $(\R^{n+1},g_{k}(t))$ could be extended until $\hat{T}$ and satisfy
        \begin{equation}
            |{\Rm}(g_{k}(t))|\leq\frac{\hat{\Lambda}}{t},
        \end{equation}
        on $\R^{n+1}\times(0,\hat{T}]$.
    \end{clm}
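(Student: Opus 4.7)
The plan is to run a bootstrap (continuity--closed--open) argument that alternates Lemma \ref{noncollapsingforboundedcurvature} (which upgrades a curvature decay bound into a volume-ratio lower bound) with Lemma \ref{prioriestimate} (which upgrades a volume-ratio lower bound into a curvature decay bound). The linchpin is that each $g_k(t)$ remains rotationally symmetric, and hence locally conformally flat, so its Weyl tensor vanishes identically: $W(g_k(t))\equiv 0$. This lets me apply Lemma \ref{prioriestimate} with $c=0$, so the resulting pseudolocality constants depend only on $n$ and the noncollapsing threshold, not on the (possibly enormous) initial curvature of $g_k$.

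First I would fix the universal constants in a specific order. Proposition \ref{volumenoncollapseforrota} gives a noncollapsing constant $v=v(n,\delta)$. Plugging $v_1=v$ and $c=0$ into Lemma \ref{prioriestimate} yields pseudolocality constants $a=a(n,\delta)$ and $\hat T_2=\hat T_2(n,\delta)$; I would then set $\hat\Lambda:=2a$. Feeding $\alpha=\hat\Lambda$ back into Lemma \ref{noncollapsingforboundedcurvature} produces a noncollapsing time $\hat T_1=\hat T_1(n,\delta)$, and I would take $\hat T:=\min\{\hat T_1,\hat T_2\}$. Because $k>10$, the cutoff metric $g_k$ agrees with $g$ on $B_g(o,4)$ (in the rescaled picture $\ell=1$), so the hypotheses $\scal(g_k)\ge -1$ and $f_{k,s}\ge\delta$ there are inherited from $g$ and all constants are genuinely uniform in $k$.

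For the bootstrap itself, let $T_k^{\max}$ denote the maximal existence time of $g_k(t)$ and set
\begin{equation*}
t_k^{\ast}:=\sup\bigl\{t\in[0,T_k^{\max})\,:\,s\,|\Rm(g_k(s))|\le\hat\Lambda\ \text{for all}\ s\in(0,t]\bigr\}.
\end{equation*}
Since $|\Rm(g_k(0))|$ is bounded, Shi's theorem keeps $|\Rm(g_k(s))|$ bounded on a small initial interval, so $s|\Rm|\to 0$ as $s\to 0^+$ and hence $t_k^{\ast}>0$; the defining set is closed by smoothness of the flow. The goal is to show $t_k^{\ast}\ge\hat T$. Suppose $t_k^{\ast}<\hat T$. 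If $t_k^{\ast}<T_k^{\max}$, then $|\Rm(g_k(t))|\le\hat\Lambda/t$ on $(0,t_k^{\ast}]$; Lemma \ref{noncollapsingforboundedcurvature} (applicable since $t_k^{\ast}\le\hat T_1$) produces volume ratio $\ge v$ on $\R^{n+1}\times[0,t_k^{\ast}]$, and then Lemma \ref{prioriestimate} with $c=0$ (applicable since $t_k^{\ast}\le\hat T_2$ and balls are automatically precompact by Hopf--Rinow) upgrades this to $|\Rm(g_k(x,t))|\le a/t=\hat\Lambda/(2t)$ on $\R^{n+1}\times(0,t_k^{\ast}]$. This strict improvement contradicts the definition of $t_k^{\ast}$ by a short continuity argument. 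The remaining case $t_k^{\ast}=T_k^{\max}<\hat T$ is impossible because the same bound forces $\limsup_{t\to T_k^{\max}-}|\Rm(g_k(t))|\le\hat\Lambda/T_k^{\max}<\infty$, so Shi's theorem extends $g_k(t)$ past $T_k^{\max}$, contradicting maximality.

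The main difficulty, and the reason the whole scheme works, is guaranteeing that the pseudolocality constants are truly independent of $k$; this is precisely what the vanishing of $W$ on rotationally symmetric (hence LCF) metrics buys. If one dropped rotational symmetry, one would need to control $W$ along the flow uniformly in $k$, which is not available at this stage. The rescaling at the start reduces to $\ell=1$, so restoring $\ell$ afterwards yields $\hat T$ proportional to $\ell^2$ and $\hat\Lambda$ dimensionless, as required by the statement of Theorem \ref{theoremofnoncollapsing}.
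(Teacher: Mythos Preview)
Your proposal is correct and follows essentially the same strategy as the paper: bootstrap between Lemma~\ref{noncollapsingforboundedcurvature} and Lemma~\ref{prioriestimate}, exploiting $W(g_k(t))\equiv 0$ so that the pseudolocality constants are independent of $k$. The only difference is packaging: you run a continuous open--closed argument with a built-in factor of two (so the improved bound $|\Rm|\le \hat\Lambda/(2t)$ directly buys an extension), whereas the paper iterates an explicit Shi extension step that temporarily degrades the decay constant and then recovers it via a third time threshold $T_3$; both arrive at the same $\hat T(n,\delta)$ and $\hat\Lambda(n,\delta)$.
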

    As long as the claim holds, combined with the equivariant compactness theorem of Ricci flow \cite[Proposition 3.7.]{BHZ24} and $g_{k}(0)$ converges to $g$ locally smoothly, there is a complete and RSRF $(\R^{n+1},g(t))_{t\in[0,\hat{T}]}$ starting from $g$ such that there is a sequence $k_{j}\rightarrow\infty$ so that $(\R^{n+1},g_{k_{j}}(t))_{t\in[0,\hat{T}]}$ smoothly converges to $(\R^{n+1},g(t))_{t\in[0,\hat{T}]}$. By virtue of the Claim, $|{\Rm(g(t))}|\leq\hat{\Lambda}/t$ holds on $\R^{n+1}\times(0,\hat{T}]$. It remains to confirm the claim. From now on, we omit the index $k$ for convenience and set $t_{1}:=T_{k}$ for simplicity. Since $(\R^{n+1},g(t))_{t\in[0,t_{1}]}$ has bounded curvature and $g(0)=g$ if $|s|\leq k$, by Lemma \ref{noncollapsingforboundedcurvature}, there are two constants $T_{1}(n,\delta),v(n,\delta)>0$ such that
    \begin{equation}\label{eq105}
        f_{s}\geq0\text{ and }\frac{\vol_{g(t)}B_{g(t)}(x,r)}{r^{n+1}}\geq v,
    \end{equation}
    on $\R^{n+1}\times[0,\min\{t_{1},\hat{T}\}]$ and for all $r\in(0,1]$. Note that $\text{W}(g(t))=0$ for all $t\in[0,t_{1}]$, by Lemma \ref{prioriestimate}, there are two constants $T_{2}(n,0,v),\hat{\Lambda}(n,0,v)>0$ such that
    \begin{equation}\label{eq106}
        |{\Rm}(g(t))|\leq\frac{\hat{\Lambda}}{t},
    \end{equation}
    on $\R^{n+1}\times(0,\min\{t_{1},T_{1},T_{2}\}]$. Choosing $\hat{T}:=\min\{T_{1},T_{2},T_{3}\}>0$ for some $T_{3}$ to be determined later. If $t_{1}\geq\hat{T}$, then we done. Otherwise, let $T_{\max}\geq t_{1}$ be the supremum of $T>0$ such that, there exits a complete and RSRF $(\R^{n+1},\tilde{g}(t))_{t\in[0,T]}$ such that 
    \begin{enumerate}
        \item\label{inductionhyp1} $\tilde{g}(t)$ has bounded curvature and satisfies (\ref{eq106}) on $\R^{n+1}\times(0,T]$.
        \item\label{inductionhyp2} $\tilde{g}(t)$ is an extension of $g(t)$.
    \end{enumerate}
    Now, at $t=T$, $(\R^{n+1},\tilde{g}(T))$ is a complete and rotationally symmetric manifold with curvature bound $\hat{\Lambda}/T$, by Lemma \ref{Shi'sexitencetheroem} and uniqueness theorem again, there is a complete and RSRF $(\R^{n+1},h(t))_{t\in[0,\frac{\Lambda}{\hat{\Lambda}}T]}$ starting from $\tilde{g}(T)$ with curvature bound $a\frac{\hat{\Lambda}}{T}$, where $a$ is the constant in Lemma \ref{Shi'sexitencetheroem}. Extend $\tilde{g}$ by $h$, we construct an extension $\tilde{g}'(t)$ on $[0,T':=(1+\frac{\Lambda}{\hat{\Lambda}})T]$ with curvature bound
    \begin{equation}
        |{\Rm}(\tilde{g}'(t))|\leq\frac{\max\{\hat{\Lambda},a(\hat{\Lambda}+{\Lambda})\}}{t},
    \end{equation}
    on $\R^{n+1}\times(0,T']$. By Lemma \ref{noncollapsingforboundedcurvature}, there is a constant $T_{3}(n,\max\{\hat{\Lambda},a(\hat{\Lambda}+{\Lambda})\},\delta)=T_{3}(n,\delta)>0$ such that (\ref{eq105}) holds for $\tilde{g}'(t)$ on $\R^{n+1}\times[0,\min\{T',T_{3}\}]$. By Lemma \ref{prioriestimate}, (\ref{eq106}) holds for $\tilde{g}'(t)$ on $\R^{n+1}\times(0,\min\{T',\hat{T}\}]$. Choose $T\in((1+\frac{\Lambda}{\hat{\Lambda}})^{-1}T_{\max},T_{\max})$, and this implies $T_{\max}\geq\hat{T}$ by the choice of $T_{\max}$. Therefore, we construct an extension satisfying the claim, and this completes the proof.
\end{proof}
\begin{rmk}
    A priori, we only have $f_{s}\geq0$ on $\R^{n+1}\times[0,\hat{T}]$. Nevertheless, if $f_{s}=0$ at some point $(p,t_{0})$ with $t_{0}>0$, then the strong maximal principle would imply $f_{s}\equiv0$ on $\R^{n+1}\times[0,t_{0}]$, which is impossible. Therefore, the warped function $f$ would instantaneously satisfy $f_{s}>0$.
\end{rmk}
\begin{proof}[Proof of Corollary \ref{long-time solution}]
   By Theorem \ref{theoremofnoncollapsing}, for any $\ell>0$, there is complete and RSRF $(\R^{n+1},g_{\ell}(t))_{t\in[0,T_{g}\ell^{2}]}$ with $g_{\ell}(0)=g$ such that (\ref{eq-of-cl1}) holds for $g_{\ell}$ and on $\R^{n+1}\times(0,T_{g}\ell^{2}]$. Then the solution comes from taking a subsequential limit of $g_{\ell}$. An alternative viewpoint is to apply the uniqueness theorem \cite[Theorem 1.1.]{lee2025uniquenessricciflowscaling}, which implies $g_{\ell_{1}}\equiv g_{\ell_{2}}$ on $\R^{n+1}\times[0,T_{g}\ell_{1}^{2}]$ for all $\ell_{1}\leq\ell_{2}$.  
\end{proof}

The proof of Theorem \ref{thm-of-nondegenerated-warped-function} is almost the same as the proof of Theorem \ref{theoremofnoncollapsing}, but with a different choice of approximation.
    
\begin{proof}[Proof of Theorem \ref{thm-of-nondegenerated-warped-function}]
After a parabolic rescaling, we may assume $\ell=1$. For $k>200$ large enough, depending on $\varepsilon$, we consider a function $f_{k}:[0,\infty)\rightarrow[0,\infty)$ such that
\begin{equation}
    \begin{cases}
        f_{k}=f,\text{ on }[0,k];\\
        f_{k}\geq\frac{\varepsilon}{2},\text{ on }[k,k+1];\\
        f_{k}=\frac{\varepsilon}{2},\text{ on }[k+1,\infty)
    \end{cases}
\end{equation}
    and define the corresponding metric $g_{k}:=ds^{2}+f_{k}(s)^{2}g_{\text{std}}$. Since $(\R^{n+1},g_{k})$ is complete and having bounded curvature, by Lemma \ref{Shi'sexitencetheroem}, there is a complete and RSRF $(\R^{n+1},g_{k}(t))_{t\in[0, T_{k}]}$ with $g_{k}(0)=g_{k}$ and $|{\Rm(g_{k}(t))}|\leq\frac{a\Lambda}{t}$ on $\R^{n+1}\times(0,T_{k}]$. Similar to the proof of Theorem \ref{theoremofnoncollapsing}, we aim to extend the flow $g_{k}(t)$ until a uniform time $[0,T_{g}]$ via the pseudolocality Lemma \ref{prioriestimate} and the a priori estimate Lemma \ref{noncollapsing for bounded curvature v2} with the scaling invariant curvature bound $|{\Rm}(g_{k}(t))|\leq\Lambda_{g}/t$. We provide the proof here for completeness, although it is the same as before.

    By Lemma \ref{noncollapsing for bounded curvature v2} with $\ell=1$, $M\gtrsim k+4w$, $v=\frac{\varepsilon}{2}$, and $w=w(\varepsilon)>0$ (the curvature of cylinder with width $\varepsilon$), there are two constant $v_{1}(n,\varepsilon,\delta)>0$ and $\tilde{T}_{1}(n,\varepsilon,a\Lambda,\delta)>0$ such that 
    \begin{equation}\label{eq-of-volume ratio of v1}
        \frac{\vol_{g_{k}(t)}(B_{g_{k}(t)}(x,r))}{r^{n+1}}\geq v_{1},
    \end{equation}
    on $\R^{n+1}\times[0,\min\{\tilde{T}_{1},T_{k}\}]$. From now on, we omit the index $k$. By Lemma \ref{prioriestimate}, there are two constants $\tilde{T}_{2}(n,v_{1}),\hat{\Lambda}(n,v_{1})>0$ such that
    \begin{equation}\label{eq-of-thmv2}
        |{\Rm(g(t))}|\leq\frac{\hat{\Lambda}}{t},
    \end{equation}
    on $\R^{n+1}\times[0,\min\{T,\tilde{T}_{1},\tilde{T}_{2}\}]$. Choosing $\tilde{T}:=\min\{\tilde{T}_{1},\tilde{T}_{2},\tilde{T}_{3}\}>0$ for some $\tilde{T}_{3}$ to be determined later, and setting $t_{1}:=T$. If $t_{1}\geq\tilde{T}$, then we done. Otherwise, let $T_{\max}\geq t_{1}$ be the supremum of $T>0$ such that, there exits a complete and RSRF $(\R^{n+1},\tilde{g}(t))_{t\in[0,T]}$ such that 
    \begin{enumerate}
    \item\label{inductionhyp1v2} $\tilde{g}(t)$ has bounded curvature and satisfies (\ref{eq-of-thmv2}) on $\R^{n+1}\times(0,T]$.
        \item\label{inductionhyp2v2} $\tilde{g}(t)$ is an extension of $g(t)$.
    \end{enumerate}
    Now, at $t=T$, $(\R^{n+1},\tilde{g}(T))$ is a complete and rotationally symmetric manifold with curvature bound $\hat{\Lambda}/T$, by Lemma \ref{Shi'sexitencetheroem} and uniqueness theorem again, there is a complete and RSRF $(\R^{n+1},h(t))_{t\in[0,\frac{\Lambda}{\hat{\Lambda}}T]}$ starting from $\tilde{g}(T)$ with curvature bound $a\frac{\hat{\Lambda}}{T}$, where $a$ is the constant in Lemma \ref{Shi'sexitencetheroem}. Extend $\tilde{g}$ by $h$, we construct an extension $\tilde{g}'(t)$ on $[0,T':=(1+\frac{\Lambda}{\hat{\Lambda}})T]$ with curvature bound
    \begin{equation}
        |{\Rm}(\tilde{g}'(t))|\leq\frac{\max\{\hat{\Lambda},a(\hat{\Lambda}+{\Lambda})\}}{t},
    \end{equation}
    on $\R^{n+1}\times(0,T']$. By Lemma \ref{noncollapsing for bounded curvature v2}, there is a constant $\tilde{T}_{3}(n,\max\{\hat{\Lambda},a(\hat{\Lambda}+{\Lambda})\},\varepsilon,\delta)=\tilde{T}_{3}(n,\varepsilon,\delta)>0$ such that (\ref{eq-of-volume ratio of v1}) holds for $\tilde{g}'(t)$ on $\R^{n+1}\times[0,\min\{T',\tilde{T}_{3}\}]$. By Lemma \ref{prioriestimate}, (\ref{eq-of-thmv2}) holds for $\tilde{g}'(t)$ on $\R^{n+1}\times(0,\min\{T',\tilde{T}\}]$. Choose $T\in((1+\frac{\Lambda}{\hat{\Lambda}})^{-1}T_{\max},T_{\max})$, and this implies $T_{\max}\geq\tilde{T}$ by the choice of $T_{\max}$. Therefore, we could extend the flow until $\tilde{T}$ with a uniform estimate (\ref{eq-of-thmv2}).

    Since $g_{k}$ converges to $g$ smoothly locally, by the equivariant compactness theorem \cite[Proposition 3.7.]{BHZ24} and \cite[Corollary 3.2.]{10.4310/jdg/1246888488}, we acquire a subsequential limit of $(\R^{n+1},g_{k}(t),o)_{t\in[0,\tilde{T}]}$, a complete and RSRF $(\R^{n+1},g(t))_{t\in[0,\tilde{T}]}$ with $g(0)=g$ and
    \begin{equation}
        |{\Rm}(g(t))|\leq\frac{\hat{\Lambda}}{t},
    \end{equation}
    on $\R^{n+1}\times(0,\tilde{T}]$. Set $\Lambda_{g}:=\hat{\Lambda}$ and $T_{g}:=\tilde{T}$, we complete the proof.
\end{proof}

\section{Proof of Corollary \ref{roughinitialdata}}\label{approxmateion section}

We first introduce the evolution equation of the Ricci lower bound along RSRF.
\begin{lm}\label{eq of Ricci lower bound}
    Let $(\M,g(t))_{t\in[0,T]}$ be a Ricci flow with vanishing Weyl tensor on $\M\times[0,T]$. Define a continuous function
    \begin{equation}
        \ell(x,t):=\inf\{s>0:\ \Ric(x,t)+sg(x,t)\geq0\},
    \end{equation}
    for all $(x,t)\in\M\times[0,T]$. Then
    \begin{equation}
        \left(\frac{\partial}{\partial t}-\Delta_{g(t)}\right)\ell\leq\scal\cdot\ell+c(n)\ell^{2},
    \end{equation}
    in the sense of barrier, where $c(n)>0$ is some universal constant.
\end{lm}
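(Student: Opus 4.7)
The plan is to adapt Hamilton's standard technique for evolution inequalities on the minimum eigenvalue of a curvature tensor, exploiting the fact that $W(g(t)) \equiv 0$ forces Ricci's evolution equation to close in terms of $\Ric$ alone. Note that $\ell(x,t) = \max(0, -\lambda_{\min}(\Ric(x,t)))$, so it suffices to treat the case $\ell(x_0,t_0) > 0$ at the chosen base point; when $\ell(x_0,t_0) = 0$ both sides vanish and the inequality is trivial since $\ell \geq 0$ globally.

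At a point $(x_0,t_0)$ with $\ell > 0$, I would fix a unit eigenvector $v_0$ of $\Ric(x_0,t_0)$ with eigenvalue $\lambda_1 = -\ell(x_0,t_0)$, and extend it to a vector field $V$ on a space-time neighborhood by parallel transport along $g(t_0)$-radial geodesics, declaring $\partial_t V \equiv 0$. This arranges $\nabla V(x_0,t_0) = 0$, $\Delta V(x_0,t_0) = 0$, and $|V|_{g(t_0)}(x_0) = 1$. Then the candidate
\begin{equation*}
\varphi(x,t) := -\frac{\Ric(V,V)(x,t)}{|V|_{g(t)}^{2}(x)}
\end{equation*}
is a lower barrier for $\ell$: one has $\varphi \leq -\lambda_{\min}(\Ric) \leq \ell$ pointwise with equality at $(x_0,t_0)$. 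By the standard Hamilton--Calabi barrier argument for min-eigenvalue functions, verifying the stated inequality for $\ell$ in the barrier sense reduces to proving $(\partial_t - \Delta)\varphi(x_0,t_0) \leq \scal\cdot\ell + c(n)\ell^{2}$.

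For the evolution, I would combine Hamilton's Ricci flow identity $\partial_t R_{ij} = \Delta R_{ij} + 2R_{ikjl}R^{kl} - 2R_{ik}R^k_j$ with the choices above; the normalizing factor $|V|_{g(t)}^{2}$ has $\partial_t|V|^2 = -2\Ric(V,V) = 2\ell$ at $(x_0,t_0)$, which cancels the $-2R_{ik}R^k_j$ contribution, and the clean identity
\begin{equation*}
(\partial_t - \Delta)\varphi\big|_{(x_0,t_0)} = -2R_{1k1l}R^{kl}
\end{equation*}
emerges in an orthonormal frame with $e_1 = v_0$ diagonalizing $\Ric$. Using $W = 0$ to expand $\Rm = \frac{1}{n-2}\Ric\owedge g - \frac{\scal}{2(n-1)(n-2)}g\owedge g$ gives $R_{1k1k} = (\lambda_1+\lambda_k)/(n-2) - \scal/[(n-1)(n-2)]$ for $k\neq 1$. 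Substituting $\lambda_1 = -\ell$, using $\sum_{k\neq 1}\lambda_k = \scal + \ell$, and Cauchy--Schwarz in the form $\sum_{k\neq 1}\lambda_k^2 \geq (\scal+\ell)^2/(n-1)$, routine simplification collapses everything to
\begin{equation*}
-2R_{1k1l}R^{kl} \leq \frac{2\ell(\scal + \ell)}{n-1}.
\end{equation*}
Passing to the stated form requires a sign split: when $\scal \geq 0$ we have $\frac{2}{n-1}\scal\cdot\ell \leq \scal\cdot\ell$; when $\scal < 0$, the constraint $\scal \geq n\lambda_1 = -n\ell$ (forced by all eigenvalues being $\geq \lambda_1$) absorbs the discrepancy into a multiple of $\ell^2$, producing $c(n)$ of order $n$.

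The main obstacle is conceptual rather than computational: when $\lambda_{\min}(\Ric(x_0,t_0))$ has multiplicity greater than one, the extension $V$ is not canonical, and the definition of $\ell$ becomes non-smooth at such eigenvalue crossings. This is handled by noting that every choice of minimizing eigenvector produces a valid lower barrier with the same upper bound, so the family $\{\varphi_V\}$ parametrized by the eigenspace yields the inequality in the barrier sense simultaneously; alternatively one may regularize via Calabi's trick or interpret the inequality in the viscosity formulation. A secondary technical point is verifying that the second-order spatial extension of $V$ (i.e., $\Delta V(x_0,t_0) = 0$) can be realized while preserving the parallel transport condition, which is standard in normal coordinates.
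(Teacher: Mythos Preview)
Your argument is correct and follows the same barrier framework as the paper: extend the minimal Ricci eigenvector, compute the heat operator on $-\Ric(V,V)$ at the base point, and estimate the reaction term using the LCF decomposition of $\Rm$. Where you genuinely diverge is in the algebra that controls the reaction term. The paper invokes the Catino--Mantegazza structure theorem for locally conformally flat Ricci flow, which forces the Ricci eigenvalues into one of two configurations ($\lambda_1=\cdots=\lambda_{n-1}$ or $\lambda_2=\cdots=\lambda_n$), and then bounds each case by hand. Your route sidesteps this entirely: the single inequality $\sum_{k\ge 2}\lambda_k^2 \ge (\scal+\ell)^2/(n-1)$ collapses the reaction term to $\frac{2\ell(\scal+\ell)}{n-1}$, and the sign split with $\scal \ge -n\ell$ finishes the job, yielding $c(n)=n-2$. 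This is more elementary and arguably cleaner, since it does not depend on any structural result about the Ricci spectrum; the paper's approach, on the other hand, makes the eigenvalue rigidity explicit and gives a slightly sharper constant in one of the two cases. Your handling of the normalization (the $\partial_t|V|^2=2\ell$ term cancelling $-2R_{1k}R^k_1$) is equivalent to the paper's use of Uhlenbeck's trick $\partial_t E=\Ric(E)$; both produce the same clean identity $(\partial_t-\Delta)\varphi = -2R_{1k1l}R^{kl}$ at the base point.
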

\begin{proof}[Proof of Lemma \ref{eq of Ricci lower bound}]
    Recall that from the computation in \cite[Section 4]{ZHANG2015774}, if we diagonalize the Ricci curvature with eigenvalues $\lambda_{1}\leq\cdots\leq\lambda_{n}$, then the evolution equation of Ricci tensor becomes
    \begin{equation}
        \left(\frac{\partial}{\partial t}-\Delta_{g(t)}\right)\Ric_{ii}=\frac{2}{n-2}\sum_{k\neq i}\left(\lambda_{k}+\lambda_{i}-\frac{\scal}{n-1}\right)\lambda_{k}.
    \end{equation}
    We may assume $\ell(x_{0},t_{0})>0$ at $(x_{0},t_{0})$, otherwise we choose $0$ be the barrier function near $(x_{0},t_{0})$. Take $E\in T_{x_{0}}\M$ being the eigenvector of $\Ric(x_{0},t_{0})$ with eigenvalue $\lambda_{1}$. We adopt the Uhlenbeck's trick to extend $E$ parallel near $x_{0}$ at $t=t_{0}$ and satisfying $\frac{\partial}{\partial t}E=\Ric(E)$, then $|E|=1$ near $(x_{0},t_{0})$ and $\nabla E=\Delta E=0$ at $(x_{0},t_{0})$. Define the function  $\phi(x,t):=-\Ric(E,E)\geq \ell(x,t)$ and equality holds at $(x_{0},t_{0})$. At $(x_{0},t_{0})$,
    \begin{equation}
        \begin{aligned}
            \left(\frac{\partial}{\partial t}-\Delta_{g(t_{0})}\right)\phi(x_{0},t_{0})=&~-\frac{2}{n-2}\sum_{k=2}^{n}\left(\lambda_{k}+\lambda_{1}-\frac{\scal}{n-1}\right)\lambda_{k}-2\lambda_{1}^{2}.
        \end{aligned}
    \end{equation}
    Now, according to \cite[Corollary 1.2]{CM09}, either $\lambda_{1}=\cdots=\lambda_{n-1}$ or $\lambda_{2}=\cdots=\lambda_{n}$ holds. In the first case, the R.H.S. becomes
    \begin{equation}
    \begin{aligned}
        R.H.S.=&~ -2\lambda_{1}(\lambda_{1}-\frac{\lambda_{n}}{n-1})-\frac{2}{n-1}\lambda_{n}^{2}-2\lambda_{1}^{2}\\
        =&~-\frac{2}{n-1}\scal^{2}-\frac{4n-2}{n-1}\scal\cdot\phi-(2n+4)\phi^{2}\\
        \leq&~\scal\cdot\phi+\frac{(5n-3)^{2}-16(n+2)(n-1)}{8(n-1)}\phi^{2},
    \end{aligned}
    \end{equation}
    where we apply the Cauchy-Schwartz inequality in the last inequality. In the second case, 
    \begin{equation}
    \begin{aligned}
        R.H.S. =&~-2\lambda_{1}\lambda_{n}-2\lambda_{1}^{2}\\
        =&~(-\lambda_{1})\scal+(n-3)\lambda_{1}(\lambda_{n}-\lambda_{1})+(n-4)\lambda_{1}^{2}\\
        \leq&~\scal\cdot\phi+(n-4)\phi^{2},
    \end{aligned}
    \end{equation}
    where the last line follows from $\lambda_{1}<0$ and $\lambda_{1}\leq\lambda_{n}$. Take \\ $c(n):=\max\left\{\frac{(5n-3)^{2}-16(n+2)(n-1)}{8(n-1)},n-4\right\}$ a dimensional constant, and this completes the proof. 
\end{proof}
After applying the local maximal principle due to Lee-Tam \cite[Theorem 1.2.]{Lee_Tam_2022}, we are able to control the Ricci lower bounds along RSRF.
\begin{lm}[A priori estimate of Ricci lower bounds]\label{preservation-Ricci lower bound}
Let $(\R^{n+1},g(t))_{t\in[0,T]}$ be a complete and RSRF with $\Ric(g(0))\geq-L$ on $B_{g(0)}(p,r)$. Suppose that there is a constant $\Lambda>0$ so that $|{\Rm}(g(t))|\leq\Lambda/t$ on $\R^{n+1}\times(0,T]$. Then there is a constant $L'(n,L,\Lambda,T)>0$ such that $\Ric(p,t)\geq-L'(1+r^{-2})$ on $\forall t\in[0,T]$. 
\end{lm}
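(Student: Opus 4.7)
The plan is to invoke the Lee--Tam local maximum principle \cite[Theorem 1.2]{Lee_Tam_2022}, applied to the continuous Ricci-deficit function $\ell(x,t)$ introduced in Lemma \ref{eq of Ricci lower bound}. First I would parabolically rescale via $\tilde g(\tau) := r^{-2} g(r^2 \tau)$, so that the initial ball becomes $B_{\tilde g(0)}(p,1)$, the Ricci lower bound becomes $\Ric(\tilde g(0)) \geq -Lr^{2}$, the scale-invariant estimate $|\Rm(\tilde g(\tau))|\leq \Lambda/\tau$ is preserved, and the existence interval becomes $[0, T/r^{2}]$. Any pointwise bound $\Ric(p, \tau) \geq -\tilde L'$ in the rescaled flow with $\tilde L' = \tilde L'(n, Lr^{2}, \Lambda, T/r^{2})$ unscales to $\Ric(p, t) \geq -\tilde L' r^{-2}$, which can be repackaged as $-L'(1 + r^{-2})$ for some $L' = L'(n, L, \Lambda, T)$.

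Next, since the RSRF is locally conformally flat and hence has vanishing Weyl tensor, Lemma \ref{eq of Ricci lower bound} yields the barrier inequality $(\partial_\tau - \Delta_{\tilde g(\tau)})\ell \leq \scal(\tilde g(\tau)) \cdot \ell + c(n)\ell^{2}$. The estimate $|\Rm(\tilde g(\tau))| \leq \Lambda/\tau$ forces $|\scal| \leq n(n+1)\Lambda/\tau$ and $\ell \leq n\Lambda/\tau$, so setting $K(x,\tau) := \scal(x,\tau) + c(n)\ell(x,\tau)$, the inequality linearizes to $(\partial_\tau - \Delta)\ell \leq K \cdot \ell$ with $|K| \leq \alpha/\tau$ for some $\alpha = \alpha(n, \Lambda)$.

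Since $\ell(\cdot, 0) \leq Lr^{2}$ on $B_{\tilde g(0)}(p, 1)$ rather than the vanishing initial condition required by Lemma \ref{localmaximalpriciplev2}, I would shift to $\varphi := \ell - Lr^{2}$, which satisfies $\varphi(\cdot, 0) \leq 0$ on the ball and $\varphi \leq \alpha/\tau$ everywhere. The shift produces the inhomogeneous inequality $(\partial_\tau - \Delta)\varphi \leq K\varphi + KLr^{2}$, with forcing of order $Lr^{2}/\tau$. Applying \cite[Theorem 1.2]{Lee_Tam_2022}, the variant of the local maximum principle tailored to $1/\tau$-singular coefficients and forcings, then yields a uniform bound $\varphi(p, \tau) \leq \tilde L''$ on $[0, T/r^{2}]$, and hence $\ell(p, \tau) \leq Lr^{2} + \tilde L''$. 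Unscaling completes the argument.

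The hard part will be the inhomogeneous PDI step: the forcing $KLr^{2}$ is singular as $\tau \to 0$ and cannot be absorbed directly into the linear term $K\varphi$ near $\{\varphi = 0\}$. Resolving this likely requires an auxiliary time-correction $h(\tau)$ with $h(0) = 0$, chosen so that $\tilde\varphi := \ell - Lr^{2} - h(\tau)$ satisfies a purely homogeneous inequality within the framework of \cite[Theorem 1.2]{Lee_Tam_2022}; in this construction one may need to exploit the quadratic term $c(n)\ell^{2}$ in the original inequality rather than merely dominating it by $c(n)\ell \cdot (\alpha/\tau)$.
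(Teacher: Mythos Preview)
Your overall strategy---feed the barrier inequality of Lemma \ref{eq of Ricci lower bound} into \cite[Theorem 1.2]{Lee_Tam_2022}---is the same as the paper's. But you omit the one substantive verification the paper carries out, and in its place you introduce manipulations that are not needed.

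The missing ingredient is the injectivity-radius lower bound $\inj_{g(t)}(\R^{n+1})\geq c'(n)\sqrt{t/\Lambda}$, which is a hypothesis of \cite[Theorem 1.2]{Lee_Tam_2022}. The paper obtains it by first invoking rotational symmetry to get $\kappa(n)$-noncollapsing on all scales via \cite[Proposition 2.4]{BHZ24}, and then combining this with $|\Rm(g(t))|\leq\Lambda/t$ through Cheeger--Gromov--Taylor. This is the only place in the proof where the RSRF hypothesis is actually used; without it the cited local maximum principle does not apply, and nothing in your proposal supplies a substitute.

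Conversely, your rescaling, the shift $\varphi=\ell-Lr^{2}$, and the ensuing worry about the $1/\tau$-singular forcing are all unnecessary. Theorem 1.2 of Lee--Tam is formulated precisely for evolution inequalities of the shape produced by Lemma \ref{eq of Ricci lower bound} (linear coefficient of order $1/t$, quadratic nonlinearity, non-zero initial bound on a ball of radius $r$), and its conclusion already carries the $(1+r^{-2})$ dependence. Once the injectivity radius is in hand, the paper's proof is a direct citation---there is no reduction to the zero-initial-data version (Lemma \ref{localmaximalpriciplev2}, i.e.\ \cite[Theorem 1.1]{Lee_Tam_2022}) and no auxiliary correction $h(\tau)$ to construct.
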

\begin{proof}[Proof of Lemma \ref{preservation-Ricci lower bound}]
    By \cite[Proposition 2.4.]{BHZ24}, for all $t\in[0,T]$, $(\R^{n+1},g(t))$ is $\kappa(n)$-noncollapsed on all scale. Combining this with the curvature bound $\Lambda/t$, by the Cheeger-Gromov-Taylor's classical result, there is a constant $c'(n)>0$ such that $\inj_{g(t)}(\R^{n+1})\geq c'\sqrt{t/\Lambda}$ for all $t\in[0,T]$. Then the assertion follows from Lemma \ref{eq of Ricci lower bound} and \cite[Theorem 1.2.]{Lee_Tam_2022}.
\end{proof}

The basic idea is to find an approximation $g_{k}:=ds^{2}+f_{k}(s)^{2}g_{\text{std}}$ so that $f_{k}\rightarrow f$ smoothly locally on $\R^{n+1}\setminus\{o\}$ as $k\rightarrow\infty$, and $\scal(g_{k})$ have a uniform lower bound near the origin. This is predictable since $-2ff_{ss}+(n-1)(1-f_{s}^{2})\sim(n-1)(1-f_{s}(0)^{2})>0$ near $s=0$, it should be possible to smooth out $g$ preserving the lower bound of scalar curvature. In practice, the scalar curvature is positive near the vertex. On the other hand, the lower bounds of Ricci curvature ensure the limiting flow converges to the initial data \textit{smoothly} outside the origin.

\begin{proof}[Proof of Corollary \ref{roughinitialdata}]
    Since $f(0)=f^{(2)}(0)=0$ and $v:=f_{s}(0)\in(0,1)$, there is a $\delta>0$ such that $|f(s)|^{2},|f_{ss}(s)|^{2}\leq \frac{1-\frac{v^{2}}{4}}{100}$ for all $s\in[0,\delta]$. Let $\psi:[0,1]\rightarrow[0,1]$ be a smooth function such that
    \begin{equation}
        \begin{cases}
            \psi^{(k)}(1)=0,\text{ for all }k\geq0;\\
            \psi^{(k)}(0)=0,\text{ for all }k\geq1;\\
            \psi(0)=1;\\
            \psi'(s)\leq0,\text{ on }[0,1];\\
            |\psi'|+|\psi''|+|\psi'''|\leq C,\text{ for some constant }C>0. 
        \end{cases}
    \end{equation}
    We may assume $f_{s}\in[\frac{v}{2},\frac{v+1}{2}]$ on $[0,\delta]$. Then $-2ff_{ss}+(n-1)(1-f_{s}^{2})\geq(n-2)(1-\frac{v^{2}}{4})>0$ on $s\in[0,\delta]$. For any $k>\delta^{-1}$, we choose 
    \begin{equation}
        \varepsilon_{k}:=\min\left\{\frac{1}{Ck^{2}}\sqrt{\frac{1-\frac{v^{2}}{16}}{100}},\frac{v}{4Ck},\frac{vL}{2k^{3}C}\right\},
    \end{equation}
    and consider the function
    \begin{equation}
        F_{k}(s):=\varepsilon_{k} \psi(ks)+f(s).
    \end{equation}
    Then $F_{k}^{(1)}(s)\geq\frac{v}{4}$ on $[0,\infty)$ and on $[0,\frac{1}{k}]$,
    \begin{equation}
        -2F_{k}F_{k}^{(2)}+(n-1)\left(1-(F_{k}^{(1)})^{2}\right)\geq -2\cdot2\frac{1-\frac{v^{2}}{16}}{100}+(n-1)\left(1-\frac{v^{2}}{16}\right)\geq(n-2)\left(1-\frac{v^{2}}{16}\right).
    \end{equation}
    Note that $F_{k}(0)=\varepsilon_{k}$ and $F_{k}^{(\ell)}(0)=f^{(\ell)}(0)$ for all $\ell\geq1$, this allows us to smoothing the origin. On the other hand, if $F_{k}^{(2)}(s_{0})\geq0$, then at $s_{0}$,
    \be
    -\frac{F_{k}^{(2)}}{F_{k}}\geq-\frac{k^{2}\varepsilon_{k}\phi^{(2)}+f^{(2)}}{f}\geq-\frac{k^{3}\varepsilon_{k}Cs_{0}}{\frac{v}{2}s_{0}}-\frac{L}{n}\geq-2L.
    \ee
    This implies $\Ric(ds^{2}+F_{k}^{2}(s)g_{\text{std}})\geq-2L$ on $(\R^{n+1}\setminus\{o\})\times[0,T]$.
\medskip

    Take a smooth function $\Psi:[0,1]\rightarrow\R_{\geq0}$ with 
    \begin{equation}
        \begin{cases}
            \Psi^{(\ell)}(0)=0,\text{ for all }\ell\neq1;\\
            \Psi^{(1)}(0)=1;\\
            \Psi^{(\ell)}(1)=0,\text{ for all }\ell\geq2;\\
            \Psi^{(1)}(1)=v;\\
            \Psi^{(2)}(s)\leq0,\text{ for all }s\in[0,1].
        \end{cases}
    \end{equation}
    (For example, let $-\Psi^{''}$ be a cut-off function whose integral is $1-v$ and supported in $[0,1]$.) Thus we define a smooth warped function $f_{k}:[0,\infty)\rightarrow[0,\infty)$ such that 
    \begin{equation}
        f_{k}(s):=\begin{cases}
            \frac{\varepsilon_{k}}{\Psi(1)}\Psi\left(\frac{\Psi(1)}{\varepsilon_{k}}s\right),\text{ if }s\in[0,\frac{\varepsilon_{k}}{\Psi(1)}];\\
            F_{k}(s-\frac{\varepsilon_{k}}{\Psi(1)}),\text{ otherwise,}
        \end{cases}
    \end{equation}
    where smoothness follows from the initial condition on $f$ at $s=0$. By our construction, $g_{k}:=ds^{2}+f_{k}(s)^{2}g_{\text{std}}$ is a smooth metric on $\R^{n+1}$ and have uniform lower bound (independent of $k$) on $\scal(g_{k})$ and $f_{k}$ on $B_{g}(o,4)$ (Note that the Ricci curvature is greater than $-2L$ on $B_{g}(o,\varepsilon_{k}+1/k)$). By Theorem \ref{theoremofnoncollapsing}, there are two constants $\Lambda(n,g),T(n,g)$ and a complete and RSRF $(\R^{n+1},g_{k}(t))_{t\in[0,T]}$ with $g_{k}(0)=g_{k}$ and satisfying
    \begin{equation}\label{eq46}
        |{\Rm(g_{k}(t))}|\leq\frac{\Lambda}{t},
    \end{equation}
    on $\R^{n+1}\times(0,T]$. Also, since $\Ric(g_{k}(0))\geq-2L$ holds on $\R^{n+1}$, there is a constant $L'(L,\Lambda,n)>0$, independent of $k$, such that $\Ric(g_{k})\geq-L'$ on $\R^{n+1}\times[0,T]$ for all $k$. 
\medskip
    
    Now, by the equivariant compactness theorem \cite[Proposition 3.7.]{BHZ24}, after passing to subsequence, there is a pointed, complete, RSRF $(\M_{\infty},g_{\infty}(t),p_{\infty})_{t\in(0,T]}$ such that
    \be
    (\R^{n+1},g_{k}(t),o)_{t\in(0,T]}\overset{\text{eC-G convergence}}{\longrightarrow}(\M_{\infty},g_{\infty}(t),p_{\infty})_{t\in(0,T]},
    \ee
    as $k\rightarrow\infty$. Since $p_{\infty}$ lies on the singular orbit and the manifold $\M_{\infty}$ is unbounded, $(\M_{\infty},p_{\infty})=(\R^{n+1},o)$ follows from \cite[Proposition 2.2.]{BHZ24}. Now, by $\Ric(g_{k})\geq-L'$ and $|{\Rm(g_{k}(t)}|\leq\Lambda/t$, the distance distortion Lemma \ref{distancelemma} shows that
    \be
    d_{g_{k}(0)}(x,y)-\beta\sqrt{\Lambda t}\leq d_{g_{k}(t)}(x,y)\leq e^{2L't}d_{g_{k}(0)}(x,y),
    \ee
    for all $x,y\in\R^{n+1}$. Also, since $(\R^{n+1},d_{g_{k}(0)})$ converges to $(\R^{n+1},d_{g})$, this confirms the $\mathcal{C}^{0}$-convergence from $(\R^{n+1},d_{g_{\infty}(t)})$ to $(\R^{n+1},d_{g})$, the metric completion of $(\R^{n+1}\setminus\{o\},g)$, as $t\rightarrow0$. By the smooth convergence, we still have $\Ric(g_{\infty}(t))\geq-L'$ on $\R^{n+1}\times(0,T]$. This enhances the regularity of convergence at $t=0$ outside the vertex by \cite[Theorem 1.6.]{DSS22}, that is, $(\R^{n+1}\setminus\{o\},g_{\infty}(t))$ smoothly converges to $(\R^{n+1}\setminus\{o\},g)$ as $t\rightarrow0$. This confirms the first assertion. The second case follows from our construction, since $\scal(g_{k})\geq0$ and $f^{(1)}_{k}\geq v/4$ hold on $\R^{n+1}$ if $\scal(g)\geq0$ and $f_{s}\geq v$ on $\R^{n+1}$. This completes the proof.
\end{proof}
\begin{rmk}
    It might be possible to weaken the boundary condition on $f$. We require the cone-like condition just for the convenience of gluing construction. The conditions $f(0)=f_{ss}(0)=0$ and $f_{s}(0)\in(0,1)$ should be enough.
\end{rmk}

\end{document}